\def\eps{\varepsilon }
\def\lam{\lambda }
\def\RR{\mathbb R}
\def\NN{\mathbb N}
\def\QQ{\mathbb Q}
\def\intr{\mathop\mathrm{int}\nolimits}
\def\sgn{\mathop\mathrm{sgn}\nolimits}
\def\dis{\mathop\mathrm{dis}\nolimits}
\newcommand{\set}[1]{\{#1\}}
\providecommand{\abs}[1]{\lvert#1\rvert}
\newcommand{\remove}[1]{ }
\newtheorem{theorem}{Theorem}[section]
\newtheorem{proposition}[theorem]{Proposition}
\newtheorem{lemma}[theorem]{Lemma}
\newtheorem{corollary}[theorem]{Corollary}
\theoremstyle{definition}
\newtheorem{definition}[theorem]{Definition}
\theoremstyle{remark}
\newtheorem*{remark}{Remark}
\newtheorem*{remarks}{Remarks}
\newtheorem*{example}{Example}
\numberwithin{equation}{section}
\begin{document}
\title[A simplified integral]{A simplified multidimensional integral}
\author{\'Agnes M. Backhausz}
\address{Institute of Mathematics\\
E\"otv\"os Lor\'and University\\
P\'azm\'any P. s\'et\'any 1/C\\
1117 Budapest, Hungary
}
\email{agnes@cs.elte.hu}
\author{Vilmos Komornik}
\address{D\'epartement de math\'ematique\\
         Universit\'e Louis Pasteur\\
         7 rue Ren\'e Descartes\\
         67084 Strasbourg Cedex, France}
\email{komornik@math.u-strasbg.fr}
\author{Tivadar Szil\'agyi}
\address{Department of Applied Analysis and Computational Mathematics\\
E\"otv\"os Lor\'and University\\
P\'azm\'any P. s\'et\'any 1/C\\
1117 Budapest, Hungary}
\email{sztiv@cs.elte.hu}
\subjclass[2000]{Primary 26B15; secondary 28A10}
\keywords{Integral of several variables, characterization of primitives}
\date{December 4, 2007}

\begin{abstract}
We present a simplified integral of functions of several variables. Although less general than the Riemann integral, most functions of practical interest are still integrable. On the other hand, the basic integral theorems can be obtained more quickly. We also give a characterization of the integrable functions and their primitives.
\end{abstract}

\maketitle

\section{Introduction}\label{s1}

In many undergraduate textbooks (see, e.g., \cite{Dieudonne}) the one-dimensional Riemann integral is replaced by a simpler theory: a function $f:[a,b]\to\RR$ is integrable if it is the uniform limit of a sequence of step functions, i.e., finite linear combinations of characteristic functions of bounded intervals. Although not all Riemann integrable functions are integrable in this narrower sense, many functions of practical interest are still integrable and the theory can be developed more quickly.

It is straightforward to generalize this integral to functions of several variables by changing the intervals to products of intervals but the resulting theory is less satisfactory because the class of integrable functions is too small. For example, in the two-dimensional case the characteristic functions of triangles and disks are not integrable.

A slight modification of the definition, however, leads to a substantially broader integral concept. The proofs remain short and simple but many functions of practical interest become again integrable, including the characteristic functions of Jordan measurable sets.

In Section \ref{s2} we outline briefly the general theory. The following three sections are devoted to the characterization of integrable functions, of their indefinite integrals and to the clarification of the relations between this new integral and the Riemann integral.

\section{K-integrable functions}\label{s2}

Given a positive integer $n$, by a \emph{brick} $T$ we mean a product of $n$ \emph{bounded} intervals: $T=I_1\times\cdots\times I_n$. Its \emph{volume} $\lam (T)$ is by definition the product of the lengths of the intervals. 

For the rest of this section we fix a brick $T$ and all bricks are supposed to be subsets of $T$. All functions in this section are assumed to map $T$ into $\RR$.

A \emph{step function} $g$ is by definition a finite linear combination of characteristic functions of bricks:
\begin{equation*}
g=\sum_{j=1}^M c_j\chi_{T_j}
\end{equation*}
where $M$ is a positive integer, $c_1$,\ldots, $c_M$ are real numbers and $T_1$,\ldots, $T_M$ are bricks. The \emph{integral} of a step function is defined by the usual formula:
\begin{equation*}
\int_Tg=\sum_{j=1}^M c_j\lam (T_j).
\end{equation*}
It is well-known that this integral does not depend on the particular choice of the bricks $T_j$ and that it is a positive linear form on the vector space of step functions, satisfying the estimate
\begin{equation*}
\left\vert \int_Tg\right\vert \le \max_T\abs{g}\sum_{j=1}^M \lam (T_j).
\end{equation*}

In order to extend the integral to more general functions we introduce the following convergence notion:

\begin{definition}\label{d21}
A sequence of functions $f_1$, $f_2$,\ldots \emph{converges nearly uniformly} to $f$, if
\begin{enumerate}    

\item the sequence $(f_m)$ is uniformly bounded;

\item for every $\delta>0$ there exist finitely many bricks of
total volume less than $\delta$ such that  $f_m$ converges
uniformly to $f$ on the complement of their union.
\end{enumerate}
\end{definition}

Now we generalize the integral as follows:
\begin{definition}\label{d22}
A function $f$ is \emph{K-integrable} if there is a sequence $(g_m)$ of step functions  converging nearly uniformly to $f$. Then the K-integral of $f$ is defined by the following formula:
\begin{equation*}
\int_T f := \lim_{m\to\infty} \int_T g_m.
\end{equation*}
\end{definition}

Using the above mentioned properties of the integral of step functions, one can readily verify that this limit exists, and its value does not depend on the particular choice of the sequence of step functions. (This also implies that for step functions we obtain the same integral as before.) Furthermore, one can easily establish the following proposition:

\begin{proposition}\label{p23}\mbox{}

(a) The K-integrable functions form a vector space on which the integral is a positive linear form. 
\medskip

(b) If a sequence of K-integrable functions $f_m$ converges nearly uniformly to a function $f$, then $f$ is also K-integrable, and
\begin{equation*}
\int_T f = \lim_{m\to\infty} \int_T f_m.
\end{equation*}
\end{proposition}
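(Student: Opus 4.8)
The plan is to deduce both parts from the corresponding facts for step functions by checking that nearly uniform convergence is compatible with the linear operations and with truncation. For part~(a), I would first record that if $(g_m)$ and $(h_m)$ converge nearly uniformly to $f$ and $h$ and $\alpha,\beta\in\RR$, then $(\alpha g_m+\beta h_m)$ converges nearly uniformly to $\alpha f+\beta h$: the sequence is uniformly bounded by $\abs\alpha$ times a bound for $(g_m)$ plus $\abs\beta$ times a bound for $(h_m)$, and for a given $\delta>0$ one takes the union of the two brick families of total volume $<\delta/2$ furnished by Definition~\ref{d21}(2), obtaining finitely many bricks of total volume $<\delta$ off whose union both sequences, hence their combination, converge uniformly. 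Taking $g_m,h_m$ to be step functions shows that the K-integrable functions form a vector space, and passing to the limit in $\int_T(\alpha g_m+\beta h_m)=\alpha\int_T g_m+\beta\int_T h_m$ gives linearity. For positivity I would use that step functions are stable under $g\mapsto g^+=\max(g,0)$ (any finite family of bricks refines to a common grid on which $g$ is constant); since $x\mapsto x^+$ is $1$-Lipschitz, whenever step functions $g_m\to f$ nearly uniformly the functions $g_m^+$ also converge nearly uniformly, and if $f\ge0$ their limit is $f^+=f$, so $\int_T f=\lim_m\int_T g_m^+\ge0$.

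For part~(b) I would first isolate the one estimate that carries the analysis. If $\phi$ is K-integrable with $\abs\phi\le C$ and $\sup_{T\setminus B}\abs\phi\le\eta$ for some finitely many bricks $B$ of total volume $<\delta$, then writing $\abs\phi=\abs\phi\,\chi_B+\abs\phi\,(1-\chi_B)$ and using the positivity and consequent monotonicity from part~(a) gives $\int_T\abs\phi\le C\delta+\eta\,\lam(T)$. Granting that $f$ is K-integrable, part~(a) yields $\int_Tf_m-\int_Tf=\int_T(f_m-f)$; applying the estimate to $\phi=f_m-f$ with $\eta=\eta_m:=\sup_{T\setminus B}\abs{f_m-f}\to0$ (uniform convergence off the brick family attached to $\delta$) gives $\limsup_m\abs{\int_Tf_m-\int_Tf}\le C\delta$ for every $\delta$, whence $\int_Tf=\lim_m\int_Tf_m$. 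Here $f$ is bounded, being a nearly uniform limit, so such a $C$ exists.

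The substance of part~(b) is therefore to exhibit step functions converging nearly uniformly to $f$. I would pass to a fast subsequence: for each $m$ choose, by Definition~\ref{d21}(2), finitely many bricks $\mathcal A_m$ of total volume $<2^{-m}$ off which $f_k\to f$ uniformly, and then relabel so that $\abs{f_m-f}<2^{-m}$ off $\mathcal A_m$. Each $f_m$ being K-integrable, I would approximate it by a step function truncated to $\abs{g_m}\le C$ with $\abs{g_m-f_m}<2^{-m}$ off finitely many bricks $\mathcal B_m$ of total volume $<2^{-m}$, so that $\abs{g_m-f}<2^{-m+1}$ off the finite family $E_m:=\mathcal A_m\cup\mathcal B_m$, of total volume $<2^{-m+1}$. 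The main obstacle is precisely the verification that $(g_m)$ converges nearly uniformly to $f$: for a given $\delta$ the definition demands a single finite brick family, of total volume $<\delta$, off which the entire tail $(g_m)$ converges uniformly, whereas the natural candidate $\bigcup_{m\ge N}E_m$, with $\sum_{m\ge N}\lam(E_m)<2^{-N+2}<\delta$, is only a countable union of bricks. Resolving this bookkeeping of the exceptional sets — absorbing the whole tail into one finite family, while exploiting that near uniform convergence constrains only large $m$ — is the delicate point of the argument; once it is settled, $(g_m)$ is a nearly uniform approximation of $f$ by step functions, $f$ is K-integrable, and the reduction above finishes the proof.
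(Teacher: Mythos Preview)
The paper gives no proof of this proposition; it only asserts that one ``can easily establish'' it. Your argument for part~(a) is correct and standard. For part~(b), the reduction you give --- once $f$ is known to be K-integrable, the estimate $\bigl|\int_T(f_m-f)\bigr|\le C\delta+\eta_m\,\lam(T)$ forces $\int_Tf_m\to\int_Tf$ --- is sound, and you correctly isolate the real content as showing that $f$ itself is K-integrable. You also correctly pinpoint the obstacle in the diagonal construction: the exceptional families $E_m=\mathcal A_m\cup\mathcal B_m$ vary with $m$, and Definition~\ref{d21}(2) demands a \emph{single} finite brick family for each $\delta$, whereas $\bigcup_{m\ge N}E_m$ is only a countable union. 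You leave this unresolved, calling it bookkeeping.

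It is not bookkeeping, and it cannot be resolved: part~(b) is \emph{false} as stated. On $T=[0,1]$, enumerate $\QQ\cap(0,1)=\{q_1,q_2,\dots\}$, set $\psi_k(x)=\sin\bigl(1/(x-q_k)\bigr)$ for $x\ne q_k$ and $\psi_k(q_k)=0$, and put $f_m=\sum_{k=1}^m 2^{-k}\psi_k$. Each $\psi_k$ is bounded and continuous on $[0,1]\setminus\{q_k\}$, so $\dis_2(f_m)=\{q_1,\dots,q_m\}$ is finite and $f_m$ is K-integrable by Theorem~\ref{t33} (whose proof in the paper nowhere invokes Proposition~\ref{p23}(b)). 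Since $\abs{f-f_m}\le\sum_{k>m}2^{-k}=2^{-m}$, the convergence $f_m\to f:=\sum_{k\ge1}2^{-k}\psi_k$ is uniform, hence nearly uniform. But at each $q_k$ one has $f=2^{-k}\psi_k+g_k$ with $g_k=\sum_{j\ne k}2^{-j}\psi_j$ a uniform limit of functions continuous at $q_k$, hence itself continuous at $q_k$; so $f$ inherits the second-kind discontinuity of $\psi_k$ there. Thus $\dis_2(f)\supset\QQ\cap(0,1)$, which is not a Jordan null set, and Theorem~\ref{t33} shows $f$ is \emph{not} K-integrable. What your argument does genuinely establish is the conditional statement: if $f$ is already K-integrable and $f_m\to f$ nearly uniformly, then $\int_Tf_m\to\int_Tf$.
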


In order to show that many functions of practical interest are K-integrable, we recall that a set is a \emph{Jordan null set} if for each $\eps>0$ it can be covered by finitely many bricks of total volume less than $\eps$, and that a set is \emph{Jordan measurable} if it is bounded and its boundary is a Jordan null set. We say that a property holds \emph{Jordan almost everywhere} if it holds outside a Jordan null set.

We have the following results:

\begin{proposition}\label{p24}\mbox{}

(a) If $f$ is K-integrable and $g=f$ Jordan almost everywhere, then $g$ is also K-integrable and $\int_T g =\int_T f$. 
\smallskip

(b) If $f$ is continuous on a compact Jordan measurable set $C$ and vanishes on $T\setminus C$, then $f$ is K-integrable.
\smallskip

(c) If $A\subset T$ is Jordan measurable, then $\chi_A$ is K-integrable and $\int_T\chi_A$ is equal to the Jordan measure of $A$. 
\end{proposition}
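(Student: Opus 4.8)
The plan is to establish (a), (b), (c) in turn, obtaining (c) from (a), (b) and the positivity of the integral. The common tool, which I would record first, is that \emph{a function $h\colon T\to\RR$ vanishing outside a Jordan null set $N$ is K-integrable with $\int_T h=0$}: the constant sequence $g_m\equiv0$ satisfies Definition \ref{d21}(1) trivially, and for (2), given $\delta>0$ one covers $N$ by finitely many bricks of total volume $<\delta$, outside whose union $h\equiv0\equiv g_m$; the (possibly large or even unbounded) values of $h$ on $N$ are harmlessly hidden inside the exceptional bricks. Part (a) is then immediate: $g-f$ vanishes outside a Jordan null set, so it is K-integrable with integral $0$, whence by the linearity in Proposition \ref{p23}(a) the function $g=f+(g-f)$ is K-integrable and $\int_T g=\int_T f$.

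For (b) I would approximate $f$ by grid step functions. Partition $T$ by grids of mesh $\eta_m\to0$ and let $g_m$ take, on each grid brick $Q$, the value $f(c_Q)$ for a chosen $c_Q\in Q$ if $c_Q\in C$ and $0$ otherwise; each $g_m$ is a step function with $\abs{g_m}\le\max_C\abs{f}=:K$, so Definition \ref{d21}(1) holds. The crux is (2), and here the delicate point is that one must exhibit a \emph{single} family of exceptional bricks serving all large $m$ at once. Given $\delta>0$, cover the compact Jordan null set $\partial C$ by finitely many bricks of total volume $<\delta$ with union $U$, and set $d=\dis(\partial C,T\setminus U)>0$. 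Once the mesh is so small that every grid brick has diameter $<d$, a grid brick meeting $T\setminus U$ cannot meet $\partial C$, so, being connected, it lies wholly in $\intr C$ or wholly in $T\setminus\bar C$. In the first case $Q\subseteq C$ and $\abs{f-g_m}$ is at most the oscillation of $f$ over $Q$, which tends to $0$ uniformly by the uniform continuity of $f$ on $C$; in the second $f\equiv0\equiv g_m$. Thus $g_m\to f$ uniformly on the fixed set $T\setminus U$, establishing (2). This is the main obstacle: the positive distance $d$ is exactly what lets the boundary-straddling bricks of the shifting grid be absorbed into the one fixed family $U$.

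For (c), note that $\chi_A=\chi_{\bar A}$ off $\partial A$, a Jordan null set, and that $\chi_{\bar A}$ is continuous (indeed constant $1$) on the compact Jordan measurable set $\bar A$ and vanishes off it; hence (b) gives that $\chi_{\bar A}$ is K-integrable and (a) transfers this to $\chi_A$. For the value I would not use the approximating sequence but squeeze directly via positivity (Proposition \ref{p23}(a)): any finite family of disjoint bricks inside $A$ gives $\sum\chi_{B_j}\le\chi_A$, hence $\sum\lam(B_j)\le\int_T\chi_A$, while any finite brick cover of $A$ gives $\chi_A\le\sum\chi_{B_k'}$, hence $\int_T\chi_A\le\sum\lam(B_k')$. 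Passing to the supremum over inner packings and the infimum over outer covers sandwiches $\int_T\chi_A$ between the inner and outer Jordan contents of $A$, which agree precisely because $A$ is Jordan measurable. (When $T$ is not closed one reads $\bar A$ relative to $T$; the discrepancy sits in $\partial T$, itself a Jordan null set, so it does not affect the argument.)
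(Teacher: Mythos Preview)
Your argument is correct and, for part (b), follows the same construction as the paper: grid step functions sampling $f$ at brick centers, with the bricks meeting $\partial C$ absorbed into the exceptional family. Your explicit handling of the fixed-exceptional-family requirement via the positive distance $d=\dis(\partial C,T\setminus U)$ is in fact more careful than the paper's one-line sketch. For (a) the paper is slightly more direct---rather than isolating the lemma on null-supported functions and invoking linearity, it simply observes that any step-function sequence converging nearly uniformly to $f$ also converges nearly uniformly to $g$, once a cover of $\{f\ne g\}$ is adjoined to the exceptional bricks---and for (c) it reads off the value of the integral from $\int_T g_m$ (which are Riemann-type approximations to the Jordan measure) rather than squeezing via positivity and inner/outer content. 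These are minor rearrangements of the same ideas; in particular your detour through $\chi_{\bar A}$ and (a) makes explicit a step the paper leaves implicit when it calls (c) ``a special case of (b)''.
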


\begin{proof}\mbox{}

(a) If a sequence of step functions $f_m$ converges nearly uniformly to $f$, then it converges nearly uniformly to $g$, too.
\medskip

(b) Consider a tiling of $T$ with $m^n$ translates $T_i$ of the brick $m^{-1}T$, $m=1,2,\ldots$, and introduce the step function $g_m:=\sum_{i=1}^{m^n}f(x_i)\chi_{T_i}$ where $x_i$ is the center of $T_i$. Using the uniform continuity of $f\vert_C$ and the fact that the total volume of the bricks $T_i$ meeting the boundary of $C$ tends to zero as $m\to\infty$, one can readily show that $g_m$ converges nearly uniformly to $f$.
\medskip

(c) This is a special case of (b) and $\int_T g_m$ converges to the Jordan measure of $A$ by the definition of the Jordan measure.
\end{proof}

It is easy to prove that for $n\ge 2$ the K-integral of continuous functions may be computed by successive integration:

\begin{proposition}\label{p25}
If $f:T=[a_1,b_1]\times\cdots\times [a_n,b_n]\to\RR$ is continuous $(n\ge 2)$, then
\begin{equation}\label{21}
\int f(x)\ dx
=\int_{a_1}^{b_1}\Bigl(\dots \Bigl(\int_{a_n}^{b_n}
f(x_1,\dots, x_n)\ dx_n\Bigr) \dots\Bigr) \ dx_1.
\end{equation}
\end{proposition}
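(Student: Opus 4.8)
The plan is to show that both sides of \eqref{21} equal the common limit of the \emph{midpoint Riemann sums} of $f$ over the uniform subdivisions of $T$. For the left-hand side this is almost immediate: a brick is a compact Jordan measurable set, so Proposition~\ref{p24}(b) applies with $C=T$, and its proof exhibits the step functions $g_m=\sum_{i=1}^{m^n}f(x_i)\chi_{T_i}$ (where $T_i$ runs over the $m^n$ translates of $m^{-1}T$ and $x_i$ is the center of $T_i$) as a sequence converging nearly uniformly to $f$. Hence, by Definition~\ref{d22},
\begin{equation*}
\int_T f=\lim_{m\to\infty}\int_T g_m=\lim_{m\to\infty}S_m,\qquad S_m:=\sum_{i=1}^{m^n}f(x_i)\lam(T_i).
\end{equation*}
It therefore remains to identify $\lim_m S_m$ with the iterated integral, and this last step no longer involves the K-integral at all: it is a purely classical statement about Riemann sums of a continuous function.

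Since the subdivision is a product grid, the sum $S_m$ factors. Writing $h_k=(b_k-a_k)/m$ for the common length in the $k$-th coordinate and $c_i^{(k)}=a_k+(i-1/2)h_k$ for the midpoints, each center is $(c_{i_1}^{(1)},\dots,c_{i_n}^{(n)})$ and the corresponding brick has volume $h_1\cdots h_n$, so
\begin{equation*}
S_m=\sum_{i_1=1}^{m}h_1\Bigl(\cdots\Bigl(\sum_{i_n=1}^{m}h_n\,f(c_{i_1}^{(1)},\dots,c_{i_n}^{(n)})\Bigr)\cdots\Bigr).
\end{equation*}
I would then compare this iterated midpoint sum with the iterated integral by replacing the sums by integrals one coordinate at a time, starting from the innermost. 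The engine for each replacement is the elementary one-dimensional estimate that, for $\psi$ continuous on $[a,b]$ with modulus of continuity $\omega$, the midpoint sum differs from $\int_a^b\psi$ by at most $(b-a)\,\omega(h)$, where $h$ is the mesh.

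Carrying this out requires controlling the accumulation of errors, which is the main point to get right. Because $f$ is uniformly continuous on the compact brick $T$ it has a single modulus of continuity $\omega_f$, and every slice $t\mapsto f(c_1,\dots,c_{n-1},t)$ has modulus of continuity at most $\omega_f$, uniformly in the frozen coordinates; hence replacing the innermost sum by $\int_{a_n}^{b_n}$ changes $S_m$ by at most $\lam(T)\,\omega_f(h_n)$. The key observation that lets the process continue is that the partially integrated function, e.g.\ $(x_1,\dots,x_{n-1})\mapsto\int_{a_n}^{b_n}f(x_1,\dots,x_{n-1},t)\,dt$, is again continuous, with modulus of continuity bounded by $(b_n-a_n)\,\omega_f$ (pull the supremum under the integral sign). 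Iterating, the $k$-th replacement costs at most $C\,\omega_f(h_k)$ for a constant $C$ depending only on $T$, so after all $n$ steps
\begin{equation*}
\left\lvert S_m-\int_{a_1}^{b_1}\cdots\int_{a_n}^{b_n}f\right\rvert\le C\sum_{k=1}^n\omega_f(h_k),
\end{equation*}
which tends to $0$ as $m\to\infty$ since every $h_k\to0$. This yields $\lim_m S_m$ equal to the right-hand side of \eqref{21} and completes the proof. The only genuine subtlety, beyond the bookkeeping, is that at each stage one integrates a function that is still continuous, so that both the midpoint estimate and the continuity of the next slice remain available; everything else rests on the uniform continuity of $f$ together with the product structure of the grid.
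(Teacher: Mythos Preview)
Your argument is correct, but it takes a more computational route than the paper's. The paper observes that the identity \eqref{21} is trivially true for step functions (for a brick $T_j$ the K-integral is just the product of side lengths, which factors through the iterated integral), then chooses \emph{any} sequence of step functions $g_m$ converging uniformly to $f$ and passes to the limit on both sides: the left side converges by Definition~\ref{d22}, and on the right the uniform bound $\abs{f-g_m}<\eps$ propagates through the nested one-dimensional integrals to give an error of at most $\eps\,\lambda(T)$. No modulus of continuity is invoked, and no iterated replacement of sums by integrals is needed.

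Your approach instead fixes the particular midpoint step functions from Proposition~\ref{p24}(b), rewrites $\int_T g_m$ as an iterated midpoint sum using the product grid, and then peels off the sums one at a time via the one-dimensional midpoint estimate, tracking the modulus of continuity of the successive partial integrals. This works and is entirely self-contained, but it is longer and requires keeping track of how the modulus of continuity grows (by a factor of $b_k-a_k$) at each stage. The paper's route buys brevity and conceptual clarity---reduce to the trivial case, then take a limit---while yours makes the identification of both sides with the common Riemann-sum limit completely explicit. One small point you leave implicit and the paper states: the continuity of the successive partial integrals is what guarantees that the right-hand side of \eqref{21} is well-defined as an iterated K-integral in the first place.
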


\begin{proof}
Assume for simplicity that $n=2$. (The proof is easily adapted to the general case.) Using the uniform continuity of $f$ one may readily verify that the function
\begin{equation*}
x_1\mapsto \int_{a_2}^{b_2} f(x_1, x_2)\ dx_2
\end{equation*}
is well-defined and continuous on $[a_1,b_1]$, so that both sides of \eqref{21} are defined in the sense of K-integrals. 

Using the uniform continuity of $f$ again, there exists a sequence of step functions $g_m$ converging uniformly to $f$ on $[a_1,b_1]\times [a_2,b_2]$. 
Since the equality \eqref{21} clearly holds for step functions, it suffices to prove that 
\begin{equation*}
\int_{a_1}^{b_1}\Bigl(\int_{a_2}^{b_2} g_m(x_1, x_2)\ dx_2\Bigr) \ dx_1\to 
\int_{a_1}^{b_1}\Bigl(\int_{a_2}^{b_2} f(x_1, x_2)\ dx_2\Bigr) \ dx_1.
\end{equation*}
This can be proved easily: for any given $\eps>0$, we have $\abs{f-g_m}<\eps$ on $T$ for all sufficiently large $m$. For every such $m$ we then also have
\begin{equation*}
\int_{a_2}^{b_2} f(x_1, x_2)-\eps\ dx_2\le \int_{a_2}^{b_2} g_m(x_1, x_2)\ dx_2\le \int_{a_2}^{b_2} f(x_1, x_2)+\eps\ dx_2
\end{equation*}
for every $x_1\in [a_1,b_1]$. It follows that
\begin{multline*}
\int_{a_1}^{b_1}\Bigl(\int_{a_2}^{b_2} f(x_1, x_2)-\eps\ dx_2\Bigr) \ dx_1
\le \int_{a_1}^{b_1}\Bigl(\int_{a_2}^{b_2} g_m(x_1, x_2)\ dx_2\Bigr) \ dx_1\\
\le \int_{a_1}^{b_1}\Bigl(\int_{a_2}^{b_2} f(x_1, x_2)+\eps\ dx_2\Bigr) \ dx_1,
\end{multline*}
i.e.,
\begin{equation*}
\left\vert \int_{a_1}^{b_1}\Bigl(\int_{a_2}^{b_2} g_m(x_1, x_2)\ dx_2\Bigr) \ dx_1
-\int_{a_1}^{b_1}\Bigl(\int_{a_2}^{b_2} f(x_1, x_2)\ dx_2\Bigr) \ dx_1\right\vert \le \eps \cdot\lambda(T).\qedhere
\end{equation*}

\end{proof}

The nearly uniform convergence is not topological:

\begin{proposition}\label{p26}
The nearly uniform convergence of K-integrable functions cannot be derived from
a topology on the set of K-integrable functions.
\end{proposition}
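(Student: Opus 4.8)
The plan is to invoke a necessary condition satisfied by every topological notion of sequential convergence, the \emph{subsequence criterion}: if the convergence came from a topology $\tau$ on the space $X$ of K-integrable functions, then whenever every subsequence of $(f_m)$ has a further subsequence converging to $f$, the sequence $(f_m)$ would itself converge to $f$. (Indeed, if $f_m\not\to f$ in $\tau$, then some $\tau$-open set $U\ni f$ is missed by infinitely many terms, and that subsequence admits no subsubsequence converging to $f$.) It therefore suffices to produce a sequence $(f_m)$ in $X$ together with a candidate limit, for which nearly uniform convergence violates this criterion; I would take $f=0$.

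For the construction I would use a sweeping (``typewriter'') sequence of step functions. At each level $k=1,2,\dots$ tile $T$ into $k^n$ congruent sub-bricks (translates of $k^{-1}T$, as in the proof of Proposition \ref{p24}), and list the corresponding characteristic functions $\chi_{T_{k,j}}$ in order of increasing $k$. The resulting sequence $(f_m)$ consists of step functions, hence lies in $X$, and has the two features I shall exploit: the supporting bricks have all side lengths, and in particular volumes, tending to $0$; and at every level the supports tile all of $T$.

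First I would show that $(f_m)$ does not converge nearly uniformly to $0$. Fix $\delta$ with $0<\delta<\lam(T)$ and any finite union $E$ of bricks with $\lam(E)<\delta$. Then $T\setminus E\ne\emptyset$, and since the sub-bricks of each level tile $T$, at least one of them meets $T\setminus E$; hence $\sup_{T\setminus E}\abs{f_m}=1$ for infinitely many $m$, so $f_m$ cannot converge uniformly to $0$ on $T\setminus E$. Thus condition (2) of Definition \ref{d21} fails and there is no nearly uniform convergence to $0$. Next I would verify the subsequence property: given any subsequence, write its terms as $\chi_{S_i}$ with $\lam(S_i)\to 0$. The centres of the bricks $S_i$ lie in the compact set $T$, so by the Bolzano--Weierstrass theorem some subsubsequence has centres converging to a point $x_0\in T$; since the side lengths also tend to $0$, for each $\delta>0$ eventually $S_i$ is contained in one fixed brick $Q\ni x_0$ with $\lam(Q)<\delta$. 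On $T\setminus Q$ this subsubsequence is eventually $0$, so it converges uniformly there, and being bounded by $1$ it converges nearly uniformly to $0$. This contradicts the subsequence criterion, proving the proposition.

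I expect the main obstacle to be this last step: turning the cheap fact ``volumes tend to $0$'' into genuine nearly uniform convergence of a well-chosen subsubsequence. The point is that near uniformity requires the exceptional set to be a \emph{single finite} family of bricks of small total volume, so a subsequence whose supports merely shrink will not suffice — they must also \emph{localize}, which is exactly what the compactness extraction of convergent centres provides. This is the Jordan-content analogue of the classical observation that almost everywhere convergence is not topologizable, with the finiteness of the exceptional brick family here playing the role of small measure there.
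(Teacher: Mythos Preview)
Your argument is correct: the typewriter sequence does fail to converge nearly uniformly to $0$, while the Bolzano--Weierstrass extraction localizes a subsubsequence inside a single small brick, giving nearly uniform convergence to $0$; this violates the subsequence criterion, so the convergence is not topological.

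The paper's proof is genuinely different and somewhat more elementary. Instead of the subsequence criterion, it exploits a cruder consequence of topological convergence: if for each $m$ the \emph{constant} sequence $g_m,g_m,\ldots$ converges to $0$, then every $g_m$ lies in every neighbourhood of $0$, and hence the sequence $(g_m)$ itself must converge to $0$. The paper takes $g_m=\chi_{\{q_m\}}$ with $\{q_1,q_2,\ldots\}$ an enumeration of $\QQ^n\cap T$; each constant sequence converges nearly uniformly to $0$ because a single point can be covered by an arbitrarily small brick, yet $(g_m)$ does not, since any finite brick cover of volume $<\lam(T)$ misses infinitely many rational points. Your approach mirrors the classical proof that a.e.\ convergence is not topologizable and highlights the role of localization (Jordan-finiteness of the exceptional set); the paper's approach avoids compactness altogether and shows that the failure already occurs at the level of the very weak ``constant-sequence'' test, which in particular implies that in \emph{any} topology compatible with nearly uniform convergence the point $0$ would have to be non-Hausdorff-separated from each $g_m$.
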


\begin{proof}
We enumerate the rational vectors in $T$:
$\QQ^n\cap T=\set{q_1,q_2,\ldots,q_m,\ldots }$, and for
every $m\in \NN$ we set $g_m(q_m)=1$ and
$g_m(x)=0$ for $x\in T\setminus\set{q_m}$.
For a fixed $m\in\NN$ the constant sequence of step
functions $g_m, g_m, g_m,\ldots$ converges nearly uniformly to the constant zero
function $0$. In a topological
space this would imply that each $g_m$ is in the intersection of the neighborhoods of $0$, thus the sequence $(g_m)$ converges nearly uniformly
to the function $0$, too. But this contradicts to Definition \ref{d21}.
\end{proof}

\begin{remarks}\mbox{}

\begin{itemize}
\item The above sequence $(g_m)$ and $g\equiv 0$
show that the condition
\begin{equation*}
\lim_{m\to\infty}\inf\{\sup_{x\in T\setminus H}\left\{\left|f_m(x)-f(x)\right|\right\}\,:\, H\subset T \text{ is of Jordan measure zero}\}=0
\end{equation*}
does not imply that the sequence $(f_m)$ converges nearly uniformly to $f$.

\item On the other hand, the sequence $g_m=\chi_{\left[0,1/m\right]}$ on
$[0,1]$ converges nearly uniformly to  the constant zero function, but
\begin{equation*}
\inf\{\sup_{x\in T\setminus H}\left\{\left|g_m(x)-f(x)\right|\right\}\,:\, H\subset T \text{ is of Jordan measure zero}\}=1
\end{equation*}
for every $m\in\NN$.
\end{itemize}
\end{remarks}

It follows from the definitions that neither the K-integrability of a function $f:T\to\RR$, nor the value of the integral changes if we replace $f$ by its restriction to the \emph{open} brick $\intr (T)$. We may therefore restrict our investigation without loss of generality to functions defined on a \emph{closed} brick.

\section{Characterization of K-integrable functions: bounds and discontinuities}
\label{s3}

In this section we fix a closed brick $T=[a_1,b_1]\times\cdots\times [a_n,b_n]$ and all bricks are supposed to be subsets of $T$. All functions in this section are assumed to map $T$ into $\RR$.

We are going to describe the K-integrable functions $f$. For this we need an extension of the notion of right-hand limit and left-hand
limit to the case of functions defined on $T$ with $n>1$.

\begin{definition}\label{d31}
Given a function $f$, a point $x=(x_1,x_2,\ldots,x_n)\in \intr(T)$ and 
a nonzero vector $\alpha=(\alpha_1,\alpha_2,\ldots ,\alpha_n )\in\set{-1,0,1}^n$,
which we consider as a \emph{direction}, we set
\begin{equation*}
I_k=
\begin{cases}
[a_k,x_k)\quad &\text{if }\alpha_k=-1,\\
\set{x_k}&\text{if }\alpha_k=0,\\
(x_k,b_k]&\text{if }\alpha_k=1
\end{cases}
\end{equation*}
for $k=1,2,\ldots,n$,  and we define $T_{x, \alpha}:=I_1\times I_2\times \ldots \times I_n$.

If the restriction of $f$ to $T_{x,\alpha}$ has a limit $L$ at $x$,
then we say that $f$ \emph{has a limit at $x$ in the direction
$\alpha$}, and that this limit is equal to $L$. We write
\begin{equation*}
\lim_{x,\alpha} f=\lim\limits_{t\to x} f|_{T_{x,\alpha}}(t). 
\end{equation*}
\end{definition}

Since we assumed that $x\in \intr(T)$, $x$ is a limit point of each $T_{x,\alpha}$. Furthermore, $T\setminus \set{x}$ is the disjoint union of the sets $T_{x,\alpha}$.

There are $3^n-1$ different nonzero vectors in $\set{-1,0,1}^n\setminus\set{0}$, so we
can consider the limit of $f$ in $3^n-1$ directions. If $f$ is
continuous at $x\in \intr(T)$, then $f$ has a finite limit at $x$ in
every direction, equal to $f(x)$.

In the one-dimensional case, when $T=[a,b]$, the limits
in direction $-1$ and $1$ are the left-hand and right-hand
limits of $f$, respectively. According to this, we introduce
another definition:

\begin{definition}\label{d32}
Assume that the function $f$ is not continuous at a point $x\in\intr(T)$. 
We say that $x$ is a \emph{discontinuity point of the first kind} of $f$ if
 $f$ has a finite limit at $x$ in every direction $\alpha$. Otherwise we say that 
 $x$ is a \emph{discontinuity point of the second kind}
of $f$. We denote the set of discontinuity points of the first, 
respectively second kind of $f$ by $\dis_1(f)$ and $\dis_2(f)$.
\end{definition}

The main result of this section is the following

\begin{theorem} \label{t33}\mbox{}

(a) A function $f$ is K-integrable if and only
if the following two conditions are satisfied:

\begin{enumerate}
\item $f$ is bounded outside some Jordan null set;

\item its discontinuities of the second kind form a Jordan null set.
\end{enumerate}
\smallskip

(b) Moreover, the discontinuities of a K-integrable function may be covered by a Jordan null set $H$ and countably many hyperplanes whose normal vectors are parallel to some of the coordinate axes; in particular, a K-integrable function is continuous Lebesgue almost everywhere.
\end{theorem}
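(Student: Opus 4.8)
The plan is to prove the two implications of (a) separately and then read off (b) from the structure of the approximating step functions. For \emph{necessity} in (a), I would start from a sequence of step functions $(g_m)$ converging nearly uniformly to $f$, uniformly bounded by some $C$. For every $\delta>0$ fix a finite family of bricks of total volume $<\delta$, with union $U_\delta$, on whose complement $g_m\to f$ uniformly. On $T\setminus U_\delta$ one has $\abs{f}=\lim\abs{g_m}\le C$, so $\set{x:\abs{f(x)}>C}\subseteq U_\delta$; letting $\delta\to0$ shows this set is a Jordan null set, which is condition (1). For condition (2) I would use two elementary facts: (i) a step function has a finite limit in every direction at every interior point, since near such a point it is constant on each orthant-piece $T_{x,\alpha}$; and (ii) a uniform limit of functions each possessing a directional limit at $x$ again possesses that limit (a Cauchy-criterion argument, which also keeps the limit finite because $\abs{g_m}\le C$). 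If $x\in\intr(T)\setminus\overline{U_\delta}$, then $g_m\to f$ uniformly on a full neighborhood of $x$, so by (i)--(ii) $f$ has finite limits in all directions at $x$, i.e. $x\notin\dis_2(f)$. Hence $\dis_2(f)\subseteq\overline{U_\delta}$ for every $\delta$, and since $\overline{U_\delta}$ is covered by finitely many closed bricks of arbitrarily small total volume, $\dis_2(f)$ is a Jordan null set.

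For (b) I would push the same device one step further. Each $g_m$ is discontinuous only on the finite union $P_m$ of the axis-parallel face hyperplanes of its bricks; set $P=\bigcup_m P_m$, a countable family of hyperplanes with normals along the coordinate axes. If $x\in\intr(T)$ lies neither in $P$ nor in $\overline{U_{2^{-k}}}$, then near $x$ all $g_m$ are continuous and converge uniformly to $f$, so $f$ is continuous at $x$. Thus the discontinuity set of $f$ is contained in $P\cup\overline{U_{2^{-k}}}$ for every $k$, hence in $P\cup H$ with $H:=\bigcap_k\overline{U_{2^{-k}}}$, and $H$ is a Jordan null set since it is covered by bricks of total volume $<2^{-k}$ for each $k$. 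As a countable union of hyperplanes and a Jordan null set are both Lebesgue null, $f$ is continuous Lebesgue almost everywhere.

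The real work is the \emph{sufficiency} in (a). Assuming (1) and (2), let $S$ be a Jordan null set containing both the exceptional set of (1) and $\dis_2(f)$, so that off $S$ the function is bounded by $C$ and every interior point is a point of continuity or a discontinuity of the first kind. The guiding idea is a multidimensional analogue of the classical fact that a bounded regulated function of one variable is a uniform limit of step functions: off $S$ the jumps of $f$ occur across axis-parallel hyperplanes, and I would approximate $f$ by step functions built on partitions of $T$ into bricks whose faces are adapted to these jump hyperplanes. At a first-kind discontinuity $x$ all limits $\lim_{x,\alpha}f$ exist, so on a small ball about $x$, minus the coordinate hyperplanes through $x$, the oscillation of $f$ is small; hence a partition whose faces contain the relevant jump hyperplanes has small oscillation of $f$ on each open cell lying off $S$. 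Sampling $f$ on such cells yields a step function $g$ with $\abs{g}\le C$ and $\abs{g-f}$ small off a neighborhood of $S$ of volume $<\delta$, and I would assemble these approximations into a single uniformly bounded sequence realizing nearly uniform convergence, using for each prescribed $\delta$ a fixed finite brick cover of $S$ as the exceptional set.

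The main obstacle is precisely the control of these jump hyperplanes. Since a genuine step function separates only finitely many of them, one must show that for every $\eps>0$ only finitely many axis-parallel hyperplanes carry a jump of size $\ge\eps$ off $S$; otherwise such hyperplanes would accumulate and, by compactness of $T$, force a point where some directional limit fails, i.e. a second-kind discontinuity outside $S$, contradicting the choice of $S$. Making this accumulation argument precise in several variables---where large jumps may sit at nearby points on a lower-dimensional approach set rather than along full hyperplanes---together with the bookkeeping needed to convert the scale-$\eps$ approximations into one nearly uniformly convergent sequence, is where the technical heart of the proof lies.
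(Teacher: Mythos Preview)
Your treatment of necessity in (a) and of part (b) is correct and matches the paper's argument essentially line for line: uniform boundedness of the approximating step functions controls $\abs{f}$ off a Jordan null set, uniform convergence on the complement of small brick-unions transports the directional limits of the $g_m$ to $f$, and the hyperplane decomposition of (b) comes from the discontinuity sets of the $g_m$ together with the intersection of the exceptional covers.

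The gap is in your sufficiency argument. The paper does \emph{not} try to identify global ``jump hyperplanes''; instead it constructs the approximating step functions via Cousin's lemma. One fixes the closed Jordan null set
\[
H_0=\overline{\partial T\cup\dis_2(f)\cup\{x:\abs{f(x)}>C\}},
\]
covers it by an open union $B_m$ of bricks of total volume $<1/m$, and defines a gauge $p$ on $T$: for $x\in B_m$ choose $p(x)$ so that $B(x,p(x))\subset B_m$, and for $x\notin B_m$ choose $p(x)$ so small that on every orthant $T_{x,\alpha}\cap B(x,p(x))$ the values of $f$ lie within $1/m$ of $\lim_{x,\alpha}f$. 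Cousin's lemma then yields a $p$-fine dotted partition $\{(T_i,\xi_i)\}$, and one sets $g_m$ on $T_i$ equal to the directional limit of $f$ at the tag $\xi_i$ in the appropriate direction (and $0$ when $\xi_i\in B_m$). This gives $\abs{g_m-f}<1/m$ on $T\setminus B_m$ and $\abs{g_m}\le C$, so $g_m\to f$ nearly uniformly.

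Your proposed route does not reach this conclusion. The step ``a partition whose faces contain the relevant jump hyperplanes has small oscillation of $f$ on each open cell'' is a non sequitur: knowing that near every $x\notin S$ each open orthant carries small oscillation is a purely \emph{local} statement, and cells of a partition determined only by finitely many jump hyperplanes may be large, so that the continuous variation of $f$ alone can make the oscillation on a cell of order $1$ (take $f(x,y)=x$). Refining by a uniform fine grid does not help either: a small cell $\sigma$ that avoids all hyperplanes carrying jumps $\ge\eps$ can still have large oscillation, because the local $\delta(x)$ witnessing closeness to the directional limits varies from point to point and need not dominate the mesh size of $\sigma$. This is exactly the obstruction that Cousin's lemma is designed to remove: it produces a \emph{tagged} partition in which each brick $T_i$ is contained in the ball $B(\xi_i,p(\xi_i))$ prescribed by the local behaviour at the tag, so the local control becomes cellwise control. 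Your accumulation argument about finitely many large-jump hyperplanes, even if it could be made precise, would therefore not suffice by itself; you would still need a gauge-type partition to pass from pointwise to cellwise estimates, at which point the hyperplane bookkeeping becomes unnecessary.
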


We divide the proof into several parts.

\begin{proof}[Proof of the necessity of the boundedness condition in (a)]
Let $f$ be K-integrable and consider a sequence of step
functions $g_m$ which converges nearly uniformly to $f$. By the first part of 
Definition \ref{d21} there is a
positive number $C>0$ such that $\abs{g_m(x)}<C$ for every $m\in\NN$ and $x\in T$. 
Furthermore, it follows from the second part of Definition \ref{d21} that $g_m(x)\to f(x)$ for all $x\in T$, except perhaps a Jordan null set. We conclude by observing that $\abs{f(x)}\le C$ at all these points.
\end{proof}

\begin{proof}[Proof of the necessity of the discontinuity condition in (a)]
Let $f$ be a K-in\-tegr\-able function and $\delta$ a positive number.
We show that the set $\dis_2(f)$ can be covered by finitely many closed bricks of total volume less than $\delta$. 

Since $f$ is K-integrable, by definition there is a sequence
of step functions $g_m$ which converges nearly uniformly to $f$.
There exist therefore finitely many closed bricks of total volume $<\delta$ such that $g_m$ converges uniformly to $f$ outside the union $A_{\delta}$ of these bricks. It suffices to prove the inclusion
$\dis_2(f)\subset A_{\delta}$. Equivalently, we prove that if $x\in\intr
(T)\setminus A_{\delta}$, then $f$ has a finite limit in every direction $\alpha$.

Since $A_{\delta}$ is closed,  $g_m\to f$
uniformly on an open neighborhood of $x$. Since $g_m$ is a step
function for each $m\in\NN$, it has a finite limit 
\begin{equation*}
\lim_{y,\alpha} f_m=\lim_{t\to  y} f_m|_{T_{x,\alpha}}(t)
\end{equation*}
at every point $y\in\intr(T)$,  in every direction $\alpha$. 
The uniform convergence in an open neighborhood of $x$
implies that 
\begin{equation*}
\lim_{x,\alpha} f=\lim_{t\to  x} f|_{T_{x,\alpha}}(t)
\end{equation*}
also exists and is finite in every direction. 
\end{proof}

Our proof of the sufficiency part of the theorem is based on the so-called Cousin's lemma (see for example
\cite{Bartle}, \cite{Pfeffer}):

\begin{lemma}\label{l34}
For every  positive
function $\delta:T\to (0,\infty)$ there exists a $\delta$-fine dotted partition of $T$,
i.e., a finite number of pairs $(T_1,\xi_1)$, $(T_2,\xi_2)$,\ldots, $(T_N,\xi_N)$ satisfying the following conditions:

\begin{itemize}
\item $T_i\subset T$ is a closed brick for $i=1,2,\ldots,N$;

\item $\intr (T_i )\cap\intr (T_j )=\emptyset$ if $i\ne j$;

\item $T=\bigcup_{i=1}^N T_i$;

\item $\xi_i\in T_i\subset B(\xi_i,\delta(\xi_i))$ for $i=1,2,\ldots,N$.
\end{itemize}
\end{lemma}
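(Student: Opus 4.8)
The plan is to argue by contradiction using repeated bisection of $T$ together with a compactness (nested bricks) argument; this is the classical route to Cousin's lemma. Call a closed subbrick $S\subseteq T$ \emph{good} if it admits a $\delta$-fine dotted partition in the sense of the statement (with $T$ replaced by $S$), and \emph{bad} otherwise. The first observation I would record is a gluing property: if a closed brick $S$ is written as a union of finitely many closed subbricks with pairwise disjoint interiors, each of which is good, then $S$ itself is good. Indeed, the union of the individual $\delta$-fine partitions is a $\delta$-fine partition of $S$, since all four defining conditions are preserved under such a union.

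Next I would set up the bisection. Splitting each edge $[a_k,b_k]$ of $T$ at its midpoint produces $2^n$ closed subbricks with pairwise disjoint interiors whose union is $T$. Suppose, contrary to the claim, that $T$ is bad. By the gluing property at least one of these $2^n$ pieces must be bad; call it $T^{(1)}$. Iterating the bisection yields a nested sequence of bad bricks $T=T^{(0)}\supseteq T^{(1)}\supseteq T^{(2)}\supseteq\cdots$ in which the $k$-th brick has each edge length equal to $2^{-k}$ times the corresponding edge length of $T$; in particular their diameters tend to $0$.

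Now I would invoke compactness. Since the $T^{(k)}$ form a decreasing sequence of nonempty compact sets with diameters tending to zero, their intersection is a single point $\xi\in T$, and $\xi\in T^{(k)}$ for every $k$. Because $\delta(\xi)>0$ while the diameters shrink to $0$, there is an index $k$ with $T^{(k)}\subseteq B(\xi,\delta(\xi))$. For this $k$ the single pair $(T^{(k)},\xi)$ already satisfies all four conditions of a $\delta$-fine dotted partition of $T^{(k)}$: the brick is closed, the interior-disjointness condition is vacuous (there being no pair $i\ne j$), $T^{(k)}$ is covered by itself, and $\xi\in T^{(k)}\subseteq B(\xi,\delta(\xi))$. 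Hence $T^{(k)}$ is good, contradicting its badness. This contradiction shows that $T$ is good, which is exactly the assertion.

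I expect the only delicate point to be the compactness step — more precisely, confirming that the common point $\xi$ lies in every $T^{(k)}$ (so that it is an admissible tag) and that the shrinking diameters force some $T^{(k)}$ inside the single ball $B(\xi,\delta(\xi))$. Everything else (the gluing of partitions, and the existence of a bad child at each bisection) is routine bookkeeping with the four defining conditions. Note that no property of $\delta$ beyond its pointwise positivity is used, which is precisely why the gauge $\delta$ may vary arbitrarily from point to point.
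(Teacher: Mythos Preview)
Your argument is correct and is precisely the classical bisection proof of Cousin's lemma. The paper itself does not prove this lemma at all: it merely states the result and cites \cite{Bartle} and \cite{Pfeffer} for a proof, so there is nothing to compare against beyond noting that your approach is the standard one found in those references.
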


\begin{proof}[Proof of the sufficiency part  in (a)]
It follows from the boundedness and discontinuity conditions that for some positive $C$
\begin{equation*}
H_0=\overline{\partial T\cup\dis_2(f)\cup(T\setminus f^{-1}[-C,C])}
\end{equation*}
is a Jordan null set. There exists therefore for each $m\in \NN$ an open set $B_m$, which is the union of finitely many open
bricks of total volume $<1/m$, such that $H_0\subset B_m$.
We may also assume that $B_{m+1}\subset B_m$ for every $m\in
\NN$.

We need to define a sequence of step functions which converges nearly uniformly to
$f$. Let us fix a positive integer $m\in \NN $. First we
define a function  $p=p_m$ as follows. 
For each $x\in B_m$ we choose $p(x)>0$ such that the
open ball of center $x$ and radius $p(x)$, denoted by
$B(x,p(x))$, is a subset of $B_m$. This is possible because $B_m$ is open.
In the other case, if $x\in T\setminus B_m$, then $x\notin H_0$, hence $x$
cannot be a discontinuity of the second kind of $f$. It follows that for each direction 
$\alpha$  there is a number $p_{\alpha}(x)>0$ such that
\begin{equation*}
\left|f(y)-\lim_{x,\alpha} f\right|<1/m\quad\text{for every}\quad
y\in T_{x,\alpha}\cap B(x,p_{\alpha}(x)).
\end{equation*}
We denote by $p(x)$ the minimum of the $3^n-1$ values 
$p_{\alpha}(x)$. Then $p(x)$ is well-defined and positive for $x\in T$, and
\begin{equation*}
\left|f(y)-\lim_{x,\alpha} f\right|<1/m \quad\text{for every}\quad
x\in T\setminus B_m \text{ and } y\in T_{x,\alpha}\cap B(x,p(x)).
\end{equation*}

Applying Cousin's lemma for $\delta=p$, we obtain a $p$-fine dotted
partition of $T$. Now we define a step function $g_m$.

For any fixed point $y\in T$ we choose the smallest $i$ such that $y\in T_i$. If for
this index $i$ we have $\xi_i\in B_m$, then we set
$g_m(y)=0$. If $\xi_i\notin B_m$, then $\xi_i$ cannot be a discontinuity point of the second kind
of $f$. If $y=\xi_i$, then we set $g_m(y)=f(y)$. Finally, if 
$y\neq\xi_i$,  then there is a unique direction $\alpha$ such that $y\in T_{\xi_i,\alpha}$.
In this case we set $g_m(y)=\lim\limits_{\xi_i,\alpha}
f$. 

It is clear that for each $m\in\NN $, $g_m$ is a well-defined step function. We complete the proof of the theorem by proving that  $g_m$
converges nearly uniformly to $f$.

Since the $m$th dotted partition is $p_m$-fine, using the definition of $p_m$ and
$g_m$ we obtain for every $m\in \NN $ that
\begin{equation*}
\abs{f(y)-g_m(y)}<1/m\quad\text{for all}\quad y\in T\setminus B_m.
\end{equation*}

Furthermore, since $B_M\supseteq B_{M+1}$ for every
$M\in\NN $, this implies that $g_m\to  f$ uniformly
on $T\setminus B_M$ as $m$ approaches $\infty$. 
Since $B_M$ is the union of finitely many bricks  of total volume $<1/M$, we conclude that
the second condition of Definition \ref{d21} is satisfied.

If $g_m(y)\neq 0$ for some $m\in\NN $ and $y\in T$, then $g_m(y)$ is the value or the limit of $f$ at some
point $\xi_i\notin B_m$. Since $H_0$ is closed and $H_0\subset B_m$, then $\xi_i$ has an open neighborhood, disjoint from $H_0$,
so that $\abs{g_m(y)}\leq C$. This
means that the first condition of Definition \ref{d21} is also satisfied.
\end{proof}

\begin{proof}[Proof of part (b) of the theorem]
Let $f:T\to\RR$ be a K-integrable function and let $(g_m)$ be a sequence of step functions, nearly uniformly converging to $f$. The  discontinuities of each step function may be covered by finitely many hyperplanes of the form $x_j=c_j$ with $1\leq j\leq n$ and $c_j\in\RR$. Let us denote by $H$ the union of all these, countably many hyperplanes.

By the definition of the nearly uniform convergence, for each positive integer $M$ there exist finitely many closed bricks of total volume $<1/M$ such that $g_m$ converges uniformly to $f$ outside the union $A_M$ of these bricks. We may assume that each $A_M$ contains the boundary of $T$. The proof will be completed if we show that $f$ is continuous at every point $x\in T\setminus (H\cup A)$ with $A:=\cap A_M$.

If $x$ is such a point, then $x\in T\setminus (H\cup A_M)$ for a suitable $M$. Since $T\setminus A_M$ is an open set, $g_m$ converges uniformly to $f$ in a neighbourhood of $x$. Since, furthermore, $x\notin H$, each $g_m$ is continuous in $x$ and therefore $f$ is also continuous in $x$.
\end{proof}

Using part (b) of Theorem \ref{t33} we can prove easily the converse of part (c) of Proposition \ref{p24}:

\begin{corollary}\label{c35}
If $A\subset T$ and $\chi_A$ is K-integrable, then $A$ is Jordan measurable. 
\end{corollary}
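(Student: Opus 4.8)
The plan is to use the standard characterization of Jordan measurability: a bounded set is Jordan measurable precisely when its topological boundary is a Jordan null set. Since $A\subset T$ and $T$ is a bounded brick, $A$ is automatically bounded, so the whole task reduces to showing that $\partial A$ is a Jordan null set. First I would record that $\partial A\subseteq\overline A\subseteq\overline T=T$ (recall that $T$ is a closed brick), so that $\partial A$ is a closed and bounded, hence compact, subset of $\RR^n$; this compactness will be the decisive feature at the end.

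Next I would identify the discontinuity set of $\chi_A$. At a point $x\in\intr(A)$ the function $\chi_A$ is identically $1$ near $x$, and at a point $x\in\intr(T\setminus A)$ it is identically $0$ near $x$; in both cases $\chi_A$ is continuous at $x$. On the other hand, at a point $x\in\partial A\cap\intr(T)$ every neighbourhood meets both $A$ and its complement, so $\chi_A$ takes both values $0$ and $1$ arbitrarily close to $x$ and is therefore discontinuous there. Hence the set of discontinuities of $\chi_A$ inside $\intr(T)$ is exactly $\partial A\cap\intr(T)$. Now I would invoke part (b) of Theorem \ref{t33}: since $\chi_A$ is K-integrable, it is continuous Lebesgue almost everywhere, so $\partial A\cap\intr(T)$ has Lebesgue measure zero. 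Adding the set $\partial T$, which is a finite union of faces lying in coordinate hyperplanes and hence a Jordan (a fortiori Lebesgue) null set, I conclude that all of $\partial A$ has Lebesgue measure zero.

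Finally I would upgrade ``Lebesgue null'' to ``Jordan null'' using compactness, which I expect to be the only real obstacle: an arbitrary Lebesgue null set (such as the countable union of hyperplanes appearing in part (b)) need not admit finite covers of small total volume. Given $\eps>0$, I cover $\partial A$ by countably many open bricks of total volume $<\eps$, which is possible because $\partial A$ is Lebesgue null. Since $\partial A$ is compact, finitely many of these bricks already cover it, and their total volume is still $<\eps$. As $\eps>0$ was arbitrary, $\partial A$ is a Jordan null set, and therefore $A$ is Jordan measurable.
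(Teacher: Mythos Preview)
Your proof is correct and follows essentially the same route as the paper: identify the discontinuity set of $\chi_A$ with the boundary $\partial A$, use part (b) of Theorem \ref{t33} to conclude it is Lebesgue null, and then invoke compactness to upgrade this to Jordan null. You are in fact slightly more careful than the paper, since you distinguish $\partial A\cap\intr(T)$ from $\partial A\cap\partial T$ (the paper simply asserts that the discontinuity set equals $\partial A$, which is not literally true at points of $\partial T$); your handling via adding the Jordan null set $\partial T$ is the clean fix.
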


\begin{proof}
If $\chi_A$ is K-integrable, then the set of discontinuities of $\chi_A$, which is the boundary of $A$, is a Lebesgue null set. Being compact it is also a Jordan null set, which implies that $A$ is Jordan measurable. 
\end{proof}

\section{Relation to Riemann-integrability}
\label{s4}

We fix again a closed brick $T$ and we consider only real-valued functions defined on $T$. The following proposition clarifies the relations between K-integrable and Rie\-mann-integrable functions.

\begin{proposition}\label{p41}\mbox{}

(a) If $f$ is an unbounded, K-integrable function, then there exists a \emph{bounded} K-integrable function $g$ which is equal to $f$ Jordan almost everywhere, so that $\int_Tf=\int_Tg$.
\smallskip

(b) Every bounded K-integrable function $f$ is Riemann integrable, and 
its K-integral is equal to its Riemann integral.
\smallskip

(c) There exist Riemann-integrable functions that are not K-integrable.
\end{proposition}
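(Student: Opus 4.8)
The plan is to handle the three parts in turn, with Theorem~\ref{t33} as the main input. For part~(a) I would invoke condition~(1) of Theorem~\ref{t33}(a): since $f$ is K-integrable there is a Jordan null set $N$ and a constant $C$ with $\abs{f}\le C$ on $T\setminus N$. I would then let $g$ agree with $f$ on $T\setminus N$ and vanish on $N$. This $g$ is bounded and equals $f$ Jordan almost everywhere, so Proposition~\ref{p24}(a) immediately yields that $g$ is K-integrable with $\int_T g=\int_T f$. This part is essentially a one-line consequence of the earlier results.

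For part~(b) I would first apply Theorem~\ref{t33}(b): a K-integrable function is continuous Lebesgue almost everywhere, so a \emph{bounded} K-integrable $f$ meets Lebesgue's criterion and is Riemann integrable. To identify the two integrals, I would choose step functions $g_m\to f$ nearly uniformly, recall that for step functions the Riemann and K-integrals coincide (both equal $\sum_j c_j\lam(T_j)$), and that $\int_T f=\lim_m\int_T g_m$ in the K-sense. It then suffices to show that the Riemann integrals $\int_T^{R} g_m$ converge to $\int_T^{R} f$. Given $\eta>0$ I would fix $\delta>0$ and a union $A_\delta$ of finitely many bricks of total volume $<\delta$ outside which $g_m\to f$ uniformly, and split the Riemann integral of $\abs{g_m-f}$ over $A_\delta$ and over $T\setminus A_\delta$: on $A_\delta$ the integrand is bounded by $C+\sup_T\abs{f}$ while $\lam(A_\delta)<\delta$, and on $T\setminus A_\delta$ the integrand is $<\eps$ for all large $m$. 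Choosing $\delta$ and $\eps$ small makes the total $<\eta$, so $\int_T^{R} g_m\to\int_T^{R} f$, and the two integrals of $f$ coincide.

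For part~(c) I would exhibit an explicit one-dimensional example (which extends to an arbitrary brick by letting the function depend on a single coordinate). Enumerating $\QQ\cap[0,1]=\set{q_1,q_2,\ldots}$ and setting $s(t)=\sin(1/t)$ for $t\ne 0$ and $s(0)=0$, I would define $f=\sum_{k\ge 1}2^{-k}s(\cdot-q_k)$. The Weierstrass test shows $f$ is bounded and that the series converges uniformly; at each irrational point every term is continuous, so $f$ is continuous there, whereas at $q_j$ the $j$-th term oscillates in $[-2^{-j},2^{-j}]$ while the remaining series stays continuous, so no one-sided limit exists and $q_j\in\dis_2(f)$. Hence the full discontinuity set is $\QQ\cap[0,1]$, a Lebesgue null set, so $f$ is Riemann integrable; but $\dis_2(f)=\QQ\cap[0,1]$ is dense, so any finite family of closed intervals covering it covers $[0,1]$ and thus has total length $\ge 1$, showing $\dis_2(f)$ is not a Jordan null set. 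By Theorem~\ref{t33}(a), $f$ is not K-integrable.

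The routine parts are~(a) and the convergence estimate in~(b); the only place needing genuine care is~(c), where I must confirm that the oscillating term produces a \emph{second}-kind rather than a first-kind discontinuity at each rational, i.e.\ that adding a remainder which is continuous at $q_j$ cannot restore a directional limit. I expect this to be the main obstacle, though it reduces to the elementary observation that the sum of a convergent quantity and a genuinely oscillating one still fails to have a limit.
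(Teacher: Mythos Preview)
Your proof is correct. Parts~(a) and~(b) follow the paper's argument almost verbatim: the paper also invokes Theorem~\ref{t33}(a) and Proposition~\ref{p24}(a) for~(a) (it truncates via $\mathrm{med}\{-C,f,C\}$ rather than zeroing out on~$N$, an inessential variant), and for~(b) it likewise uses Theorem~\ref{t33}(b) together with the same splitting of $\int_T|g_m-f|$ over a small-volume union of bricks and its complement.

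The genuine difference is in~(c). The paper's example is built on a fat Cantor set $S\subset[0,1]$ of measure $1/2$: on each removed interval $(a_n,b_n)$ one places the function $\tfrac{1}{n}\bigl(\sin\tfrac{1}{x-a_n}+\sin\tfrac{1}{b_n-x}\bigr)$, yielding $\dis_2(f)=\{a_n,b_n:n\ge 1\}$, which fails to be Jordan null because any finite cover of it by closed intervals must cover~$S$. Your example instead forces $\dis_2(f)=\QQ\cap[0,1]$ directly, via a uniformly convergent superposition of $\sin(1/t)$ bumps centred at the rationals, and appeals to density rather than to positive measure of the closure. Both constructions exploit the same mechanism (a bounded function whose second-kind discontinuities form a countable set that is Lebesgue null but not Jordan null); your version is arguably more self-contained since it avoids the auxiliary fat-Cantor construction, while the paper's example has the mild advantage that disjointness of the $(a_n,b_n)$ makes the discontinuity analysis local and immediate, with no need for the ``continuous remainder plus oscillating term'' decomposition you correctly flag as the one step requiring care.
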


\begin{proof}\mbox{}

(a) We have shown at the beginning of the proof of Theorem \ref{t33} that there exists a constant $C$ such that $\abs{f}<C$ outside a Jordan null set. Then the function
\begin{equation*}
g:={\rm med\ }\set{-C,f,C}=\min\set{\max\set{-C,f},C}
\end{equation*}
is bounded and $f=g$ outside a Jordan null set. We conclude by using part (a) of Proposition \ref{p24}.
\medskip

(b) By part (b) of Theorem \ref{t33} the discontinuities of $f$ form a Lebesgue null set. Since $f$ is
bounded, this implies that $f$ is Riemann integrable.

By Definition \ref{d22} there is a sequence of step
functions $g_m$ which converges nearly uniformly to $f$.
Since $f$ is bounded, we may assume that for some $C>0$
the inequalities $\abs{f(x)}<C$ and $\abs{g_m(x)}<C$ hold
for all $x\in T$ and $m\in\NN$.

Since the $K$-integral of a step function is clearly equal to its Riemann integral, it suffices to prove that, considering all the integrals below as Riemann integrals, 
\begin{equation*}
\lim_{m\to\infty}\int_Tg_m=\int_Tf.
\end{equation*}

For a given $\eps >0$, there exist finitely many open
bricks of total volume $\lambda(A)<\eps/C$, where the union of the bricks is denoted by $A$, such that $g_m\to f$ uniformly on $T\setminus
A$. Because of the uniform convergence, for Riemann integrals we know
that
\begin{equation*}
\lim_{m\to \infty} \int_{T\setminus A} g_m=\int_{T\setminus A} f.
\end{equation*}
Hence for large values of $m$ we have
\begin{align*}
\left|\int_T g_m-\int_T f\right|
&=\left|\int_A g_m-\int_A f+\int_{T\setminus A} g_m-\int_{T\setminus A} f\right|\\
&\leq\left|\int_A g_m\right|+\left|\int_A f\right|+\left|\int_{T\setminus A} g_m-\int_{T\setminus A} f\right|\\
&\leq C\cdot\lambda(A)+C\cdot\lambda(A)+\left|\int_{T\setminus A}(g_m-f)\right|\\
&<2C\cdot\frac{\eps}{C}+\eps =3 \eps .
\end{align*}
\medskip

(c) We construct a function $f:[0,1]\to \RR $ which is Riemann-integrable but not K-integrable. We
use the construction of the fat Cantor set $S$, which is of
Lebesgue measure $1/2$.

First we remove the open middle one fourth of $[0,1]$. Then for
$k=2,3,\ldots$ we remove the open subintervals of length $4^{-k}$ from
the middle of each of the $2^{k-1}$ remaining closed intervals. Let $I_n=(a_n,b_n)$, $n=1,2,\ldots$ be an enumeration of the removed disjoint open intervals, then $S=[0,1]\setminus\bigcup_{n=1}^\infty (a_n,b_n)$ has Lebesgue measure $1/2$.

By means of this construction we define
$f:[0,1]\to \RR $. We set $f(x)=0$ if $x\in S$, and we set
\begin{equation*}
f(x)=\frac{1}{n}\cdot\left(\sin{\frac{1}{x-a_n}}+\sin{\frac{1}{b_n-x}}\right)
\end{equation*}
if $x \in I_n$ for some $n$. It is easy to see that $f$ is well-defined and the
set of its discontinuities is $\bigcup_{n=1}^\infty\set{a_n,b_n}$. This set is countable, $f$ is bounded,
hence $f$ is Riemann-integrable. But every discontinuity is of the
second kind, and the set $\bigcup_{n=1}^\infty\set{a_n,b_n}$ is not of Jordan measure zero, because if
we choose finitely many closed intervals covering this set, then they
cover $S$, which is of positive Lebesgue measure. Applying
Theorem \ref{t33} we conclude that $f$ is not K-integrable.
\end{proof}

We end this section with two examples which exhibit two unusual properties of the K-integral.

First we show Proposition \ref{p25} on the successive integration is not true for all K-integrable functions, not even for all bounded K-integrable
functions. 

\begin{example}
There exists a bounded K-integrable function
$h:[0,1]\times[0,4]\to\RR$ such that the function $[0,4]\ni y\mapsto h(x,y)$ is K-integrable
for each $x\in[0,1]$ but the function $[0,1]\ni x\mapsto \int_0^4
h(x,y) dy$ is not K-integrable.

Indeed, consider the function $f:[0,1]\to \RR $ defined in the proof of part (c) of
Proposition \ref{p41}.  We proved that $f$ is Riemann
integrable but not K-integrable. Now we set $h(x,y)=1$
if $0\leq y\leq f(x)+2$ and $h(x,y)=0$ otherwise.
It follows from the definition of $f$ that for each $m\in
\NN$, $\abs{f(x)}\leq 1/m$ except finitely many intervals, and that $f$ is continuous on the interior of
these intervals. These properties imply that  $h$ is continuous Jordan almost everywhere, and thus
$h$ is K-integrable on $[0,1]\times[0,4]$ by Theorem
\ref{t33}. It is obvious that $h$ is bounded and $[0,1]\ni x\mapsto \int_0^4
h(x,y) dy=f(x)+2$ is not K-integrable.
\end{example}

In order to investigate the rotations it is convenient to extend slightly the definition of K-integrable functions to
functions defined on arbitrary subsets of $\RR ^n$.

\begin{definition}\label{d42}
A function $f: D\to \RR $, $D\subset\RR^n$, is \emph{K-integrable} if there exists a brick $T\subset D$ such that $f|_T$ is K-integrable and $f$ vanishes in $D\setminus T$. In this case the K-integral of $f$ is defined by
\begin{equation*}
\int_D f:= \int_T f|_T.
\end{equation*}
\end{definition}

The following example shows that the K-integrability is not invariant under rotations in $\RR^2$.

\begin{example}
Let $g$ denote the rotation of center $(0,0)$ by a fixed angle $0<\alpha<\pi/2$ in the plane $\RR^2$. Consider Thomae's function $H:[0,1]\to \RR $ defined by 
$H(x):=0$ if $x$ is irrational or zero, and $H(x)=1/q$
if $x=p/q$ with relatively prime integers satisfying $p\ne 0$ and $q>0$. Then the formula $h(x,y):=H(x)$
defines a K-integrable function $h:[0,1]\times[0,1]\to \RR $ by Theorem \ref{t33} because it is bounded and has no discontinuity points of the second  kind, but $h\circ g$ is not K-integrable because the image of $(\QQ \cap(0,1])\times[0,1]$ under the rotation $g$ consists of discontinuity points of the second kind and this set is not a Jordan null set.
\end{example}

\section{Indefinite integrals of K-integrable functions}
\label{s5}

We fix again a closed brick
$T=[a_1,b_1]\times [a_2,b_2]\times\cdots\times[a_n,b_n]$ and we denote by $\Sigma$ the family of all closed bricks contained in $T$:
\begin{equation*}
\Sigma=\left\{\left[\alpha_1,\beta_1\right]\times\left[\alpha_2,\beta_2\right]\times\ldots\times
\left[\alpha_n,\beta_n\right]:a_i\leq\alpha_i\leq\beta_i\leq b_i,\ i=1,2,\ldots, n\right\}.
\end{equation*}

If $f:T\to  \RR $ a is K-integrable function, then $f|_S$ is K-integrable for all $S\in\Sigma$. Therefore the following definition is meaningful:

\begin{definition}\label{d51}
If $f:T\to  \RR $ a is K-integrable function,
then the \emph{indefinite integral} of $f$ is the set function $\Psi:\Sigma\to  \RR$ defined by the formula
\begin{equation*}
\Psi(S):=\int_S f,\quad S\in \Sigma.
\end{equation*}
\end{definition}
 
In order to characterize the indefinite integrals we need two more definitions.

\begin{definition}\label{d52}
A function $\Phi:\Sigma\to  \RR $
is \emph{strongly differentiable} at a point $u\in \intr(T)$
with strong derivative $t\in \RR $, if for every $\eps >0$
there exists $\delta>0$ such that
\begin{equation*}
\left|\frac{\Phi(S)}{\lambda(S)}-t\right|<\eps  
\end{equation*}
for every $S\in\Sigma$ satisfying $\lambda(S)>0$ and $S\subset B(u,\delta)$. The strong derivative of $\Phi$ at $u$ is denoted by $\Phi'(u)$.
\end{definition}

\begin{definition}\label{d53}
A function $\Phi:\Sigma\to  \RR $
is \emph{strongly differentiable at a point $u\in \intr(T)$
in direction} $\alpha\in \set{-1,1}^n$ with strong derivative
$t\in\RR $, if for any $\eps >0$ there exists $\delta>0$ such that
\begin{equation*}
\left|\frac{\Phi(S)}{\lambda(S)}-t\right|<\eps  
\end{equation*}
for every $S\in\Sigma$ satisfying $\lambda(S)>0$ and $S\subset B(u,\delta)\cap\overline{T_{u,\alpha}}$. The strong derivative of $\Phi$ at $u$ in direction $\alpha$ is denoted by $\Phi'_{\alpha}(u)$.
\end{definition}
We need the condition $\alpha\in\set{-1,1}^n$ because for
other directions $\alpha$ the brick $T_{u,\alpha}$ has volume zero.

The main result of this section is the following:

\begin{theorem}\label{t54}
A function $\Psi:\Sigma\to\RR$ is the indefinite integral of a suitable K-integrable function $f:T\to\RR$ if and only if $\Psi$ has the following properties:
\begin{enumerate}
\item $\Psi$ is a Lipschitz function, i.e., there exists $L>0$ such that $\abs{\Psi(S)}\leq L\cdot\lambda(S)$ for every $S\in\Sigma$;

\item $\Psi$ is finitely additive;

\item $\Psi$ is strongly differentiable Lebesgue almost everywhere;

\item $\Psi$ is strongly differentiable in every direction $\alpha\in\set{-1,1}^n$ Jordan almost everywhere.
\end{enumerate}
\end{theorem}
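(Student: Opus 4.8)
The plan is to prove the two implications separately, using Theorem \ref{t33} as the main engine. For \emph{necessity}, suppose $\Psi(S)=\int_S f$ for a K-integrable $f$. By Proposition \ref{p41}(a) I may assume $f$ is bounded, say $\abs{f}\le C$; then (1) is immediate since $\abs{\Psi(S)}=\abs{\int_S f}\le C\lambda(S)$, and (2) holds because two bricks with disjoint interiors meet in a Jordan null set, over which the integral vanishes (Proposition \ref{p24}(a)). For (3) I recall from Theorem \ref{t33}(b) that $f$ is continuous Lebesgue almost everywhere; at a continuity point $u$ the bound $\abs{\Psi(S)/\lambda(S)-f(u)}\le\sup_{S}\abs{f-f(u)}$ over bricks $S\subset B(u,\delta)$ gives $\Psi'(u)=f(u)$. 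For (4) I fix $\alpha\in\set{-1,1}^n$: outside the Jordan null set $\dis_2(f)$ (enlarged by the exceptional set of the boundedness condition), $f$ has a finite limit $L_\alpha$ in the direction $\alpha$, and the same averaging estimate over bricks $S\subset\overline{T_{u,\alpha}}$ — whose intersection with the faces $x_k=u_k$ is Jordan null and hence negligible — yields $\Psi'_\alpha(u)=L_\alpha$.

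For \emph{sufficiency}, assume (1)--(4). First I upgrade (2) to countable additivity: if a brick is split into countably many bricks with disjoint interiors, the residue left after finitely many of them has volume tending to $0$, so by the Lipschitz bound (1) its $\Psi$-value does too. Thus $\Psi$ extends to a signed Borel measure $\mu$ with $\abs{\mu}\le L\lambda$, and Radon--Nikodym produces a bounded density $h$, $\abs{h}\le L$, with $\Psi(S)=\int_S h\,d\lambda$ in the Lebesgue sense. I then take as candidate integrand $f:=\Psi'_{\mathbf 1}$, the strong derivative in the single direction $\mathbf 1=(1,\dots,1)$ wherever it exists, setting $f:=0$ on the Jordan null set $E_{\mathbf 1}$ where it does not (condition (4)).

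The crux is to show that $f$ is K-integrable via Theorem \ref{t33}(a). Boundedness is clear since $\abs{\Psi(S)/\lambda(S)}\le L$. For the second-kind discontinuities I use that a Jordan null set has Jordan null \emph{closure}; hence, outside the Jordan null set $B:=\partial T\cup\overline{\bigcup_{\alpha\in\set{-1,1}^n}E_\alpha}$, the function $f$ equals $\Psi'_{\mathbf 1}$ on an entire neighbourhood, and all full directional derivatives $\Psi'_\alpha(u)$ exist. For $y$ lying in a quadrant of $u$, the small bricks computing $\Psi'_{\mathbf 1}(y)$ eventually fall inside the corresponding closed quadrant of $u$, which forces $f$ to possess a finite limit at $u$ in \emph{every} direction (the zero entries of the direction being filled by $+1$). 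Therefore $\dis_2(f)\subset B$ is Jordan null and Theorem \ref{t33}(a) applies. This step is the main obstacle: the Radon--Nikodym density is only determined up to Lebesgue null sets, which are far too coarse for Theorem \ref{t33}, and it is precisely the device of reading $f$ off a single directional derivative — together with the nowhere density of Jordan null sets — that converts the merely Lebesgue-almost-everywhere information into the Jordan-almost-everywhere control required.

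It remains to identify the integral. Wherever the symmetric strong derivative $\Psi'(u)$ exists — Lebesgue almost everywhere by (3) — it is taken over a larger family of bricks than the directional one, so $\Psi'_{\mathbf 1}(u)=\Psi'(u)$; comparing with cubes shrinking to $u$ and invoking ordinary Lebesgue differentiation of $\int h\,d\lambda$ gives $\Psi'(u)=h(u)$. Hence $f=h$ Lebesgue almost everywhere, so $\int_S f\,d\lambda=\Psi(S)$, and since $f$ is bounded and K-integrable, Proposition \ref{p41}(b) identifies its K-integral with this Lebesgue integral. This yields $\int_S f=\Psi(S)$ for all $S\in\Sigma$, completing the proof.
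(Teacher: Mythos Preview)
Your argument is correct, and the necessity half matches the paper's proof almost verbatim (the paper packages your averaging estimate as a separate Lemma~\ref{l55}). The sufficiency half, however, follows a genuinely different route.

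The paper defines its candidate integrand as the \emph{upper} strong derivative
\[
f(x)=\lim_{\delta\to 0^+}\sup\Bigl\{\tfrac{\Psi(S)}{\lambda(S)}:S\in\Sigma,\ \lambda(S)>0,\ S\subset B(x,\delta)\Bigr\},
\]
and then proves two dedicated lemmas: Lemma~\ref{l56} shows $\lim_{u,\alpha}f=\Psi'_\alpha(u)$ for $\alpha\in\set{-1,1}^n$, and Lemma~\ref{l57} extends this to all nonzero $\alpha\in\set{-1,0,1}^n$ by finite additivity, yielding $\dis_2(f)$ Jordan null. For the identification step the paper stays entirely elementary: it forms $\Phi=\Psi-\Psi_0$ (with $\Psi_0$ the indefinite K-integral of $f$) and proves in Lemma~\ref{l58}, via Cousin's lemma and a Botsko-style gauge argument, that a finitely additive Lipschitz brick function with $\Phi'=0$ Lebesgue a.e.\ must vanish identically.

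Your route replaces both of these devices. You take $f=\Psi'_{\mathbf 1}$, a single directional derivative, and your quadrant argument is a clean alternative to Lemmas~\ref{l56}--\ref{l57}; the closure trick ($\overline{\bigcup_\alpha E_\alpha}$ is still Jordan null, hence $f=\Psi'_{\mathbf 1}$ on a full neighbourhood of each good point) is exactly what makes this work. For the identification you extend $\Psi$ to a signed measure, invoke Radon--Nikodym to get a bounded density $h$, and use Lebesgue differentiation over cubes to match $h$ with $\Psi'$ and hence with $f$ Lebesgue a.e. One small remark: Proposition~\ref{p41}(b) only gives $K$-integral $=$ Riemann integral; you are silently using the further (standard) fact that for bounded Riemann integrable functions this equals the Lebesgue integral. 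What you gain is brevity in the identification step; what you lose is the self-contained, measure-theory-free character that the paper is aiming for --- your argument imports Carath\'eodory extension, Radon--Nikodym, and Lebesgue differentiation, whereas the paper's Lemma~\ref{l58} needs nothing beyond Cousin's lemma.
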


\begin{remark}
Omitting the last one from the four conditions we get a characterization of the indefinite integral of Riemann integrable functions.
\end{remark}

For the proof we need the following easy corollary of Definitions \ref{d32}, \ref{d52} and \ref{d53}.

\begin{lemma}\label{l55}
Let $\Psi$ be the indefinite integral of a K-integrable function $f:T\to\RR$.

If  $f$ is continuous at $u\in \intr (T)$ and $f(u)=t$,
then $\Psi$ is strongly differentiable at $u$ and $\Psi'(u)=t$.

If $f$  has a limit $t$ at $u\in \intr (T)$
in direction $\alpha\in \set{-1,1}^n$, then $\Psi$ is strongly differentiable at $u$
in direction $\alpha$ and $\Psi'_{\alpha}(u)=t$. 
\end{lemma}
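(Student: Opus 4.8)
The plan is to reduce both assertions to the positivity of the K-integral (Proposition \ref{p23}(a)): as soon as $f$ is trapped between two constants on a neighbourhood of $u$, the integral $\Psi(S)=\int_S f$ over a small brick $S$ is trapped between the same constants times $\lambda(S)$, which is precisely the estimate demanded by Definitions \ref{d52} and \ref{d53}. Throughout I use that $f|_S$ is K-integrable for $S\in\Sigma$ (noted before Definition \ref{d51}) and that a constant $c$, being a step function, satisfies $\int_S c=c\,\lambda(S)$.

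For the first assertion, fix $\eps>0$. Continuity of $f$ at $u$ together with $f(u)=t$ gives a $\delta>0$ with $\abs{f(y)-t}<\eps/2$ for every $y\in B(u,\delta)\cap T$. For any $S\in\Sigma$ with $\lambda(S)>0$ and $S\subset B(u,\delta)$, the constant functions $t-\eps/2$ and $t+\eps/2$ bound $f$ from below and above on $S$, so positivity and linearity of the integral yield $(t-\eps/2)\lambda(S)\le\Psi(S)\le(t+\eps/2)\lambda(S)$; dividing by $\lambda(S)$ gives $\abs{\Psi(S)/\lambda(S)-t}\le\eps/2<\eps$. Hence $\Psi'(u)=t$. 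This part is routine.

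The second assertion runs along the same lines, but here lies the one point needing care. The directional limit controls $f$ only on the \emph{open} orthant $T_{u,\alpha}$: it produces $\delta>0$ with $\abs{f(y)-t}<\eps/2$ for $y\in T_{u,\alpha}\cap B(u,\delta)$, whereas Definition \ref{d53} tests bricks $S\subset B(u,\delta)\cap\overline{T_{u,\alpha}}$ lying in the \emph{closure}. The discrepancy $S\setminus T_{u,\alpha}$ is contained in the finite union of hyperplanes $\set{x_k=u_k}$, $k=1,\dots,n$, and is therefore a Jordan null set. I would accordingly redefine $f$ on this set (for instance to the value $t$), obtaining a function $g$ with $g=f$ Jordan almost everywhere and $t-\eps/2\le g\le t+\eps/2$ on all of $S$. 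By Proposition \ref{p24}(a) the modification leaves the integral unchanged, so $\int_S f=\int_S g$, and applying positivity to $g$ gives $(t-\eps/2)\lambda(S)\le\Psi(S)\le(t+\eps/2)\lambda(S)$ exactly as before, whence $\Psi'_{\alpha}(u)=t$. The only real obstacle is this passage from the open orthant to its closure, and it is dissolved precisely by the Jordan-almost-everywhere invariance of the K-integral.
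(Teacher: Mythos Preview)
Your proof is correct, and in fact the paper itself gives no argument at all for this lemma, simply calling it ``an easy corollary of Definitions \ref{d32}, \ref{d52} and \ref{d53}.'' What you have written is the natural way to make that corollary explicit: trap $f$ between constants, invoke positivity of the K-integral, and divide by $\lambda(S)$. The one genuine subtlety you identify---that the directional hypothesis controls $f$ only on the open orthant $T_{u,\alpha}$ while Definition \ref{d53} tests bricks in the closure $\overline{T_{u,\alpha}}$---is a point the paper's one-line remark glosses over, and your resolution via Proposition \ref{p24}(a) (modifying $f$ on the Jordan null boundary) is clean and correct.
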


\begin{proof}[Proof of the necessity part of Theorem \ref{t54}]
If $f:T\to\RR$ is K-integrable, then by
Theorem \ref{t33} there exists $C>0$ such
that $\set{x\in T:\abs{f(x)}>C}$ is a Jordan null set.
Hence $\left|\int_S
f\right|<C\cdot\lambda (S)$ for every $S\in\Sigma$, because it is well
known for Riemann integrals, and parts (a) and (b) of Proposition \ref{p41} show that this implies the inequality for
K-integrals.
We conclude that $\Psi$ satisfies the
Lipschitz condition with the constant $C$:
\begin{equation*}
\abs{\Psi(S)}\leq C\cdot\lambda(S)\quad\text{for every}\quad S\in\Sigma.
\end{equation*}

Similarly, we deduce from the relation between the K-integral and the Riemann integral 
that $\Psi$ is finitely additive:
\begin{equation*}
\Psi(S)=\sum_{i=1}^m\Psi(S_i)
\end{equation*}
if $S\in\Sigma$, $S_i\in \Sigma$ for $i=1,2,\ldots ,m$, $S=\bigcup_{i=1}^m S_i$ and
$\intr (S_i)\cap \intr (S_j)=\emptyset$ whenever $i\ne j$.
By part (b) of Theorem \ref{t33} the set of discontinuities of $f$
is a Lebesgue null set, and by Lemma \ref{l55} this
implies that $\set{u\in T: \Psi'(u) \text{ does not exist}}$
is also a Lebesgue null set. Similarly, by Theorem \ref{t33}
the set of discontinuities of the second kind of $f$ is a Jordan null set,
and by Lemma \ref{l55} this implies that
\begin{equation*}
\set{u\in T: \exists \alpha\in\set{-1,1}^n\text{ such that $\Psi'_{\alpha}(u)$ does not exist}}
\end{equation*}
is a Jordan null set, too.
\end{proof}

The rest of this section is devoted to the proof of the sufficiency part of Theorem \ref{t54}.
Given a function $\Psi:\Sigma\to\RR$ satisfying the four conditions of the theorem, we define a function $f:T\to\RR$ by setting 
$f(x)=0$ for $x\in \partial T$ and
\begin{equation*}
f(x)=\lim_{\delta\to 0+} \sup \left\{\frac{\Psi(S)}{\lambda(S)}:S\in\Sigma,  \lambda(S)>0, S\subset B(x,\delta)\right\}
\quad\text{for}\quad x\in \intr (T).
\end{equation*}
We are going to prove in several steps that $f$ is K-integrable and that $\Psi$ is its indefinite integral.

\begin{lemma}\label{l56}
If the strong derivative $\Psi'_{\alpha}(u)=t$ exists at a point $u\in \intr(T)$  in some direction $\alpha\in\set{-1,1}^n$, then  $\lim_{u,\alpha} f =t$.
\end{lemma}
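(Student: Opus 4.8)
The plan is to unwind the two definitions involved and to reduce the statement to a single geometric containment. First I would record that $f$ is well defined: condition~(1) of Theorem~\ref{t54} gives $\abs{\Psi(S)/\lambda(S)}\le L$, so the supremum defining $f(x)$ is finite, and since this supremum is nonincreasing as the radius shrinks, the limit defining $f(x)$ exists and equals the infimum over $\delta>0$. To prove that $\lim_{u,\alpha}f=t$ it then suffices, by Definition~\ref{d31}, to show that for every $\eps>0$ there is a $\delta'>0$ with $\abs{f(y)-t}\le\eps$ for all $y\in T_{u,\alpha}$ satisfying $\abs{y-u}<\delta'$. So fix $\eps>0$ and apply Definition~\ref{d53} with $\eps/2$ to the hypothesis $\Psi'_{\alpha}(u)=t$: this yields a $\delta>0$ such that $\abs{\Psi(S)/\lambda(S)-t}<\eps/2$ for every $S\in\Sigma$ with $\lambda(S)>0$ and $S\subset B(u,\delta)\cap\overline{T_{u,\alpha}}$.

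The heart of the argument is a geometric observation. Since $u\in\intr(T)$, fix $\rho>0$ with $B(u,\rho)\subset\intr(T)$ and put $\delta'=\tfrac12\min\set{\delta,\rho}$. Take any $y\in T_{u,\alpha}\cap B(u,\delta')$. Because $\alpha\in\set{-1,1}^n$, membership $y\in T_{u,\alpha}$ means $\alpha_k(y_k-u_k)>0$ for every $k$, so $r:=\min_k\abs{y_k-u_k}>0$. I claim that for every $\eta<\min\set{r,\delta'}$ each brick $S\in\Sigma$ with $S\subset B(y,\eta)$ satisfies $S\subset B(u,\delta)\cap\overline{T_{u,\alpha}}$. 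Indeed, any $z\in B(y,\eta)$ has $\abs{z-u}\le\abs{z-y}+\abs{y-u}<\eta+\delta'<2\delta'=\min\set{\delta,\rho}$, so $z\in B(u,\delta)$ and $z\in B(u,\rho)\subset\intr(T)\subset T$; moreover $\abs{z_k-y_k}<\eta\le r\le\abs{y_k-u_k}$ forces $z_k$ onto the same side of $u_k$ as $y_k$ in each coordinate, whence $z\in T_{u,\alpha}\subset\overline{T_{u,\alpha}}$.

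Combining the two facts completes the proof. For $y$ and $\eta$ as above, every brick $S\in\Sigma$ with $\lambda(S)>0$ and $S\subset B(y,\eta)$ lies in $B(u,\delta)\cap\overline{T_{u,\alpha}}$ and therefore contributes a value in $(t-\eps/2,t+\eps/2)$; and at least one nondegenerate such brick exists, since $B(y,\eta)$ is a nonempty open subset of $\intr(T)$. Hence the supremum over bricks contained in $B(y,\eta)$ lies in $[t-\eps/2,t+\eps/2]$, and letting $\eta\to0+$ yields $\abs{f(y)-t}\le\eps/2<\eps$. As $y\in T_{u,\alpha}\cap B(u,\delta')$ and $\eps>0$ were arbitrary, this is precisely the assertion $\lim_{u,\alpha}f=t$.

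I expect the only genuinely delicate point to be the geometric claim that a small ball about a point $y$ lying strictly inside the open orthant $T_{u,\alpha}$ stays inside both the closed orthant $\overline{T_{u,\alpha}}$ and the ball $B(u,\delta)$; everything else is bookkeeping with the monotone supremum defining $f$. One must also keep in mind that $f$ is defined separately on $\partial T$, which is the reason for confining $y$ to the neighbourhood $B(u,\delta')\subset\intr(T)$, where the supremum formula is the operative definition.
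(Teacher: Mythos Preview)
Your proof is correct and follows essentially the same route as the paper's: both arguments hinge on the observation that for $\alpha\in\set{-1,1}^n$ the set $T_{u,\alpha}\cap B(u,\delta_0)$ is open, so a sufficiently small ball about any $y$ in it stays inside $B(u,\delta)\cap\overline{T_{u,\alpha}}$, forcing all the quotients $\Psi(S)/\lambda(S)$ used to compute $f(y)$ into the interval $(t-\eps,t+\eps)$. You are more explicit than the paper in constructing the radius via $r=\min_k\abs{y_k-u_k}$ and in checking that the supremum is two-sidedly bounded and that $f$ is well defined, but these are elaborations of the same idea rather than a different approach.
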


\begin{proof}
Given $\eps >0$ arbitrarily, by definition there exists $\delta_0>0$ such that 
\begin{equation*}
t-\eps <\frac{\Psi(S)}{\lambda(S)}<t+\eps 
\end{equation*}
for all bricks $S\in\Sigma$ satisfying $\lambda(S)>0$ and $S\subset B(u, \delta_0)\cap\overline{T_{u,\alpha}}$. We may assume that $B(u, \delta_0)\subset T$. It suffices to show that $\abs{f(x)-t}\leq\eps $ for every $x\in B(u,\delta_0)\cap T_{u,\alpha}$.

Since $\alpha\in\set{-1,1}^n$, $T_{u,\alpha}\cap B(u,\delta_0)$ is an open set, for every $x\in B(u,\delta_0)\cap T_{u,\alpha}$ there exists $\delta_1>0$ such that $B(x,\delta_1)\cap T\subset B(u,\delta_0)\cap T_{u,\alpha}$. This implies that for every positive number $0<\delta<\delta_1$ we have
\begin{equation*}
t-\eps \leq\sup \left\{\frac{\Psi(S)}{\lambda(S)}:S\in\Sigma,  \lambda(S)>0,  S\subset B(x,\delta)\right\}\leq t+\eps , 
\end{equation*}
It follows that
\begin{equation*}
t-\eps \leq \lim_{\delta\to 0+}\sup \left\{\frac{\Psi(S)}{\lambda(S)}:S\in\Sigma, \ \lambda(S)>0,\  S\subset B\left(x,\delta\right)\right\}\leq t+\eps ,
\end{equation*}
i.e., $\abs{f(x)-t}\leq\eps $. 
\end{proof}

\begin{lemma}\label{l57}
If the strong derivative $\Psi'_{\alpha}(u)=t$ exists at a point $u\in \intr(T)$  in \emph{every} direction $\alpha\in\set{-1,1}^n$, then  
$u$ cannot be a discontinuity point of the second kind of $f$.
\end{lemma}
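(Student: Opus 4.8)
The plan is to show that $f$ has a finite limit at $u$ in every one of the $3^n-1$ directions $\alpha\in\set{-1,0,1}^n\setminus\set{0}$, which by Definition \ref{d32} is exactly the statement that $u\notin\dis_2(f)$. For the $2^n$ corner directions $\alpha\in\set{-1,1}^n$ nothing new is needed: the hypothesis guarantees that $\Psi'_\alpha(u)$ exists, so Lemma \ref{l56} gives $\lim_{u,\alpha}f=\Psi'_\alpha(u)$, a finite number; write $t_\beta:=\Psi'_\beta(u)$ for $\beta\in\set{-1,1}^n$. The work lies in a direction $\alpha$ with a nonempty set $Z=\set{k:\alpha_k=0}$ of vanishing coordinates. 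I would introduce the $2^{\abs{Z}}$ corner refinements $R(\alpha)\subset\set{-1,1}^n$, namely those $\beta$ agreeing with $\alpha$ off $Z$, and prove that $\lim_{u,\alpha}f=\max_{\beta\in R(\alpha)}t_\beta$, which is finite.

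To do so I would fix $\eps>0$ and, applying Definition \ref{d53} to each of the finitely many $\beta\in R(\alpha)$, select a common $\delta_0>0$ with $B(u,\delta_0)\subset T$ such that $\abs{\Psi(S)/\lambda(S)-t_\beta}<\eps$ whenever $\lambda(S)>0$ and $S\subset B(u,\delta_0)\cap\overline{T_{u,\beta}}$. The goal becomes $\abs{f(x)-\max_\beta t_\beta}\le\eps$ for every $x\in T_{u,\alpha}$ with $\abs{x-u}<\delta_0/2$; letting $\eps\to0$ then yields the directional limit. The upper bound is where finite additivity is essential. Fixing such an $x$ and a small $\delta$, note that for $k\notin Z$ one has $x_k\ne u_k$, so once $\delta$ is small enough every brick $S\subset B(x,\delta)\subset B(u,\delta_0)$ has its $k$-th edge on the $\alpha_k$ side of $u_k$. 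Splitting $S$ along the hyperplanes $\set{y_k=u_k}$ for $k\in Z$ writes it as a union of sub-bricks with pairwise disjoint interiors, each contained in some $B(u,\delta_0)\cap\overline{T_{u,\beta}}$ with $\beta\in R(\alpha)$; zero-volume pieces are discarded since $\Psi$ vanishes on them by the Lipschitz condition. By finite additivity $\Psi(S)/\lambda(S)$ is then a convex combination of the densities of the positive-volume pieces, each below $t_\beta+\eps\le\max_\beta t_\beta+\eps$, so $\Psi(S)/\lambda(S)<\max_\beta t_\beta+\eps$; taking the supremum over $S$ and then $\delta\to0+$ gives $f(x)\le\max_\beta t_\beta+\eps$.

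For the matching lower bound I would choose $\beta^*\in R(\alpha)$ attaining the maximum and exploit that $x_k=u_k$ for $k\in Z$: inside any $B(x,\delta)$ one can place a positive-volume brick lying in $B(u,\delta_0)\cap\overline{T_{u,\beta^*}}$, by taking its $Z$-edges just on the $\beta^*_k$ side of $u_k$ and its remaining edges small around $x$. Its density exceeds $t_{\beta^*}-\eps$, so the supremum defining $f(x)$ is at least $t_{\beta^*}-\eps$ for every small $\delta$, whence $f(x)\ge\max_\beta t_\beta-\eps$. I expect the main obstacle to be precisely the lower-dimensional directions with $Z\ne\emptyset$, where the supremum defining $f$ ranges over bricks straddling the hyperplanes $\set{y_k=u_k}$; the point is to prevent such straddling bricks from producing densities above $\max_\beta t_\beta$, and the averaging argument above—resting on finite additivity together with the two-sided control from the directional strong derivatives—is the heart of the matter. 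The corner directions contribute nothing beyond Lemma \ref{l56}.
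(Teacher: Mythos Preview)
Your proposal is correct and follows essentially the same route as the paper: handle the corner directions via Lemma~\ref{l56}, and for a lower-dimensional direction $\alpha$ show that $\lim_{u,\alpha}f=\max_{\beta\in R(\alpha)}\Psi'_\beta(u)$ by splitting each small brick near $x\in T_{u,\alpha}$ along the hyperplanes $\{y_k=u_k\}$, using finite additivity for the upper bound and inserting a brick on the $\beta^*$ side for the lower bound. The only cosmetic differences are notational (the paper writes $B$ for your $R(\alpha)$ and chooses a point-dependent radius $\delta_x<\min\{|x_k-u_k|:\alpha_k\ne0\}$ rather than your $\delta_0/2$).
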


\begin{proof}
By the previous lemma, it is sufficient to prove that $f$ has a finite limit $\lim_{u,\alpha}f$ in each nonzero direction $\alpha\in \set{-1,0,1}^n\setminus\set{-1,1}^n$. We claim that this limit is equal to $t:=\max\set{\Psi'_{\beta}(u)\ :\ \beta\in B}$ where 
\begin{equation*}
B=\set{\beta\in\set{-1,1}^n\ : \text{ if $1\le k\le n$ and $\alpha_k\neq0$, then $\beta_k=\alpha_k$}}.
\end{equation*}

Let $\eps >0$ be given. According to Definition \ref{d53} there exists $\delta_0>0$ such that
\begin{equation*}
\left|\frac{\Psi(\sigma )}{\lambda(\sigma )}-\Psi'_{\beta}(u)\right|<\eps 
\end{equation*}
for every $\beta\in B$ and $\sigma\in\Sigma$ satisfying $\lambda(\sigma )>0$ and $\sigma\subset B(u,\delta_0)\cap \overline{T_{u,\beta}}$.

If $x\in T_{u, \alpha}\cap B(u,\delta_0)$, then there exists $0<\delta_x<\delta_0$ such that $B(x,\delta_x)\subset B(u,\delta_0)$ and
\begin{equation*}
\delta_x<\min\set{\abs{x_k-u_k}\ :\ x_k\neq u_k}.
\end{equation*}
Let us consider a brick $S\in\Sigma$ such that $\lambda(S)>0$ and $S\subset B(x,\delta_x)\subset B(u,\delta_0)$. 

The definition of $\delta_x$ implies that $\sgn(y_k-u_k)=\sgn(x_k-u_k)$ for every $y\in S$ and for each positive integer $k\le n$ satisfying $x_k\neq u_k$. Therefore $S$ is covered by the pairwise nonoverlapping bricks $\overline{T_{u,\beta}}$ ($\beta\in B$).
Using the finite additivity of $\Psi$ and setting
$S_{\beta}=S\cap\overline{T_{u,\beta}}$ for $\beta\in B$ we get
\begin{equation*}
\Psi(S)=\sum_{\beta\in B} \Psi(S_{\beta}).
\end{equation*}
Since the volume function $\lambda:\Sigma\to\RR $ is finitely additive, too, we have also
\begin{equation*}
\lambda(S)=\sum_{\beta\in B} \lambda(S_{\beta})
\end{equation*}
for the same sets $S$ and $S_{\beta}$. 

It follows from our choice of $\delta_0$ that
\begin{equation*}
\Psi(S_{\beta})< (\Psi'_{\beta}(u)+\eps )\cdot \lambda(S_{\beta})\le (t+\eps )\lambda(S_{\beta}). 
\end{equation*}
Summing them for all $\beta\in B$ we obtain that
\begin{equation*}
\Psi(S)=\sum_{\beta\in B} \Psi(S_{\beta})<(t+\eps )\sum_{\beta\in B}\lambda(S_{\beta})=(t+\eps )\lambda(S).
\end{equation*}
Since this holds for every $S\in\Sigma$ satisfying $\lambda(S)>0$ and $S\subset B(x,\delta_x)$, it follows that
\begin{equation*}
f(x)=\lim_{\delta\to 0+} \sup \left\{\frac{\Psi(S)}{\lambda(S)}:S\in \Sigma, \lambda(S)>0, S\subset B(x,\delta)\right\}\leq t+\eps .
\end{equation*}

In order to obtain a lower bound of $f(x)$  we choose a direction $\beta_{\rm max}\in B$ such that $\Psi'_{\beta_{\rm max}}(u)=t$. Since $x\in T_{u, \alpha}\cap B(u,\delta_0)$ and $\beta_{\rm max}\in B\subset\set{-1,1}^n$, for every $\delta>0$ there is a brick $S_{\delta}\in \Sigma$ such that $S_{\delta}\subset B(x,\delta)\cap T_{u,\beta_{\rm max}}$. If $\delta<\delta_x<\delta_0$, then $S_{\delta}\subset B(x,\delta_x)\subset 
B(u,\delta_0)$. Thanks to the choice of $\delta_0$, we conclude that     
\begin{equation*}
\frac{\Psi\left(S_{\delta}\right)}{\lambda (S_\delta )}>t-\eps .
\end{equation*}
Since there exists such a brick $S_{\delta}$ for every $0<\delta<\delta_x$, this implies that
\begin{equation*}
f(x)=\lim_{\delta\to 0+} \sup \left\{\frac{\Psi(S)}{\lambda(S)}\ :\ S\in \Sigma, \lambda(S)>0,  S\subset B(x,\delta)\right\}\geq t-\eps .
\end{equation*}

We have proved that for every $\eps >0$ there exists $\delta_0>0$ such that
\begin{equation*}
t-\eps \leq f(x)\leq t+\eps 
\end{equation*}
for every $x\in B(u,\delta_0)\cap T_{u,\alpha}$. In other words, $\lim_{u,\alpha}f=t$.
\end{proof}

Now we are ready to prove that $f$ is K-integrable. By Theorem \ref{t33} it suffices to show that $f$ is bounded and that its discontinuities of the second kind form a Jordan null set. The boundedness of $f$ follows from the Lipschitz property of $\Psi$. The second property follows from Lemma \ref{l57} and from the fourth condition on $\Psi$.

According to Hypothesis (3) and Lemma \ref{l56} $\Psi$ is strongly differentiable and $\Psi'(u)=f(u)$ Lebesgue almost everywhere. Since $f$ is K-integrable, it follows from the already established necessity part of Theorem \ref{t54} that its indefinite integral $\Psi_0$ is also strongly differentiable and $\Psi_0'(u)=f(u)$ Lebesgue almost everywhere. It remains to prove that $\Psi=\Psi_0$. This follows from the following lemma applied for $\Phi=\Psi-\Psi_0$.

\begin{lemma}\label{l58}
Assume that $\Phi:\Sigma\to\RR $ is finitely additive and satisfies the Lipschitz condition. If $\Phi$ is strongly differentiable and $\Phi'(u)=0$ Lebesgue almost everywhere, then $\Phi(S)=0$ for every $S\in\Sigma$.
\end{lemma}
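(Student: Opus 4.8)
The plan is to prove the statement directly by a Cousin-type argument: fix a brick $S\in\Sigma$ and $\eps>0$, and show that $\abs{\Phi(S)}\le(\lambda(S)+L)\eps$, so that letting $\eps\to0$ forces $\Phi(S)=0$. The idea is to build a gauge on $S$ that behaves differently on the good points, where $\Phi'$ vanishes, and on the exceptional null set, and then to split the resulting Cousin partition accordingly.

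First I would isolate the exceptional set. Let $N$ be the union of $\partial T$ with the set of points $u\in\intr(T)$ at which $\Phi'(u)$ either fails to exist or is nonzero; by hypothesis, and since $\partial T$ is negligible, $N$ is a Lebesgue null set. Because $N$ is only Lebesgue null and need not be Jordan null, I can merely cover it by \emph{countably} many open bricks $U_1,U_2,\ldots$ with $\sum_k\lambda(U_k)<\eps$; this passage from a finite to a countable cover is the one genuinely new ingredient compared with the earlier proofs.

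Next I would define a gauge $p\colon S\to(0,\infty)$. For $u\in S\setminus N$ we have $\Phi'(u)=0$, so by Definition \ref{d52} there is $p(u)>0$ with $\abs{\Phi(\sigma)}\le\eps\lambda(\sigma)$ for every $\sigma\in\Sigma$ satisfying $\lambda(\sigma)>0$ and $\sigma\subset B(u,p(u))$. For $u\in S\cap N$ the point $u$ lies in some open brick $U_k$, and I choose $p(u)>0$ with $B(u,p(u))\subset U_k$. Applying Cousin's lemma to the closed brick $S$ with the gauge $p$ yields a $p$-fine dotted partition $(S_1,\xi_1),\ldots,(S_m,\xi_m)$ of $S$ into pairwise nonoverlapping bricks.

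Finally I would use finite additivity to write $\Phi(S)=\sum_{i=1}^m\Phi(S_i)$ and split the sum according to whether $\xi_i\in N$. For the tags with $\xi_i\notin N$ the choice of gauge gives $\abs{\Phi(S_i)}\le\eps\lambda(S_i)$, so these terms contribute at most $\eps\lambda(S)$. For the tags with $\xi_i\in N$ the bricks $S_i\subset B(\xi_i,p(\xi_i))$ all lie inside $\bigcup_k U_k$ and are pairwise nonoverlapping, whence $\sum_{\xi_i\in N}\lambda(S_i)\le\lambda(\bigcup_k U_k)\le\sum_k\lambda(U_k)<\eps$; the Lipschitz condition then bounds these terms by $L\sum_{\xi_i\in N}\lambda(S_i)<L\eps$. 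Adding the two estimates gives $\abs{\Phi(S)}\le\eps\lambda(S)+L\eps$, as desired. The main obstacle is exactly the control of this second group of terms: since $N$ is only Lebesgue null, the Lipschitz bound is useless pointwise, and the argument succeeds only because the Cousin partition forces the bricks tagged in $N$ to be disjoint and trapped inside a cover of arbitrarily small total volume.
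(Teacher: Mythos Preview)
Your proof is correct and follows essentially the same Cousin-cover argument as the paper: define a gauge that traps the exceptional null set inside a countable cover of small total volume and uses strong differentiability elsewhere, then split the resulting partition and bound the two groups via the Lipschitz condition and the derivative estimate, respectively. The only cosmetic differences are that the paper argues by contradiction (fixing $c=\abs{\Phi(S)}>0$ and choosing the cover and gauge constants in terms of $c$) and reduces to $S\subset\intr(T)$, whereas you argue directly with an arbitrary $\eps$ and absorb $\partial T$ into the null set $N$; neither change affects the substance of the proof.
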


\begin{proof}
We adapt a method of M. W. Botsko \cite{Botsko}. We fix a Lebesgue null set $H\subset \intr(T)$ such that $\Phi'(u)=0$ for every $u\in \intr(T)\setminus H$ and we choose  $L>0$ such that $\abs{\Phi(\sigma )}\leq L\cdot \abs{\lambda(\sigma )}$ for every $\sigma\in \Sigma$. 

Assume on the contrary that $\abs{\Phi(S)}=c>0$ for some brick $S\in\Sigma$. We may assume that $S\subset \intr(T)$. Since $H$ is a Lebesgue null set, there exist open bricks $S_1,S_2,\ldots$ such that
\begin{equation*}
H\subset \bigcup_{i=1}^{\infty} S_i\quad\text{and}\quad \sum_{i=1}^{\infty} \lambda (S_i)<\frac{c}{2L}.
\end{equation*}

We define a function $p:S\to (0,\infty)$ as follows. If $x\in H\cap S$, then we choose the first brick $S_i$ which contains $x$, and we define $p(x)$ such that $B(x,p(x))\subset S_i$. If $x\in S\setminus H$, then $\Phi$ is strongly differentiable at $x$ with strong derivative $0$. According to Definition \ref{d52} we may fix $p(x)>0$ such that
\begin{equation*}
\left|\frac{\Phi(\sigma )}{\lambda(\sigma )}\right|<\frac{c}{2 \lambda(S)}\quad\text{for every $\sigma\in\Sigma$ satisfying $\sigma\subset B(x,p(x))$ and $\lambda(\sigma )>0$}.
\end{equation*}

Applying Cousin's lemma \ref{l34} there is a $p$-fine dotted partition of $S$:
\begin{multline*}
S=\bigcup_{j=1}^J T_j\quad\text{with}\quad \xi_j\in T_j\subset B(\xi_j,p(\xi_j)),\quad j=1,\ldots,J,\\
\text{and}\quad \intr(T_j)\cap \intr(T_k)=\emptyset\quad\text{whenever}\quad j\ne k.
\end{multline*}

Since the bricks $T_j$ are pairwise nonoverlapping and $\Phi$ is additive, we get
\begin{equation*}
c=\abs{\Phi(S)}=\left|\sum_{j=1}^J \Phi(T_j)\right|\leq \sum_{\xi_j\in H} \abs{\Phi(T_j)}+\sum_{\xi_j\notin H} \abs{\Phi(T_j)}.
\end{equation*}
In view of the definition of $L$, $p$ and $S_i$ we obtain that
\begin{equation*}
\sum_{\xi_j\in H} \abs{\Phi(T_j)}\leq L\cdot\sum_{\xi_j\in H} \lambda(T_j)\leq L\cdot \sum_{i=1}^{\infty} \lambda(S_i)<L\cdot\frac{c}{2L}=\frac{c}{2}.
\end{equation*}
On the other hand, using the definition of $p$ for $x\notin H$ we get
\begin{equation*}
\sum_{\xi_j\notin H} \abs{\Phi(T_j)}<\sum_{\xi_j\notin H} \lambda(T_j)\cdot \frac{c}{2\lambda(S)}\leq \frac{c}{2}
\end{equation*}
because the union of bricks $T_j$ is $S$.

It follows from these inequalities that
\begin{equation*}
c\leq \sum_{\xi_j\in H} \abs{\Phi(T_j)}+\sum_{\xi_j\notin H} \abs{\Phi(T_j)}<\frac{c}{2}+\frac{c}{2}=c,
\end{equation*}
a contradiction.
\end{proof}

\end{document}